\definecolor{red}{rgb}{0.7,0,0}
\definecolor{grey}{RGB}{112,112,112}
\definecolor{blue}{RGB}{034,113,179}
\newtheorem{theo}{Theorem}[section] 
\newtheorem{prop}[theo]{Proposition}
\newtheorem{col}[theo]{Corollary}
\theoremstyle{remark}
\newtheorem{rmk}[theo]{Remark}
\newcounter{mnotecount}[section]
\renewcommand{\themnotecount}{\thesection.\arabic{mnotecount}}
\newcommand{\mnote}[1]
{\protect{\stepcounter{mnotecount}}$^{\mbox{\footnotesize
$
\bullet$\themnotecount}}$ \marginpar{
\raggedright\tiny\em
$\!\!\!\!\!\!\,\bullet$\themnotecount: #1} }
\newcommand{\hook}{{\setlength{\unitlength}{11pt}   
                   \begin{picture}(.833,.8)
                   \put(.15,.08){\line(1,0){.35}}
                   \put(.5,.08){\line(0,1){.5}}
                   \end{picture}}}
\newcommand{\CP}{\mathbb{CP}}
\newcommand{\C}{\mathbb{C}}
\newcommand{\PP}{\mathbb{P}}
\newcommand{\RP}{\mathbb{RP}}
\newcommand{\R}{\mathbb{R}}
\newcommand{\eee}{\mathcal{V}}
\newcommand{\projb}{P_{[\nabla]}}
\newcommand{\bilb}{B_{(\Omega,I)}}
\newcommand{\bX}{X}
\newcommand{\Rho}{\mathrm{P}}
\newcommand{\mf}{N}
\newcommand{\bw}{w}
\newcommand{\mm}{\setminus}
\def\p{\partial}
\def\be{\begin{equation}}
\renewcommand{\d}{d}
\newcommand{\bv}{v}
\def\ee{\end{equation}}
\def\ep{{\varepsilon}}
\def\bea{\begin{eqnarray}}
\def\eea{\end{eqnarray}}
\newcommand{\spp}{\mathbb{S}}
\newcommand{\tr}{\operatorname{tr}}
\numberwithin{equation}{section}
\begin{document} \date{June 6, 2017}
\title[Projective surfaces and anti-self-dual Einstein metrics]{Gauge theory on projective surfaces and anti-self-dual Einstein metrics in dimension four}

\author[M.~Dunajski]{Maciej Dunajski}
\address{Department of Applied Mathematics and Theoretical Physics\\ 
University of Cambridge\\ Wilberforce Road, Cambridge CB3 0WA, UK.}
\email{m.dunajski@damtp.cam.ac.uk}
\author[T.~Mettler]{Thomas Mettler}
\address{Institut f\"ur Mathematik\\
Goethe-Universit\"at Frankfurt\\
Robert-Mayer-Str.~10, 60325 Frankfurt, Germany.
}
\email{mettler@math.uni-frankfurt.de}
\begin{abstract}
Given a projective structure  on a surface $\mf$, we show how to canonically construct a neutral signature Einstein metric with non-zero scalar curvature as well as a symplectic form on the total space $M$ of a certain rank $2$ affine bundle $M \to \mf$. The Einstein metric has anti-self-dual conformal curvature and admits a parallel field of anti-self-dual planes. We show that locally every such metric arises from our construction unless it is conformally flat. The homogeneous Einstein metric corresponding to the flat projective structure on $\RP^2$ is the non-compact real form of the Fubini-Study metric on  $M=\mathrm{SL}(3, \R)/\mathrm{GL}(2, \R)$. We also show how our construction relates to a certain gauge-theoretic equation introduced by Calderbank.
\end{abstract}
\maketitle
\section{Introduction}

A projective structure on a smooth surface $N$ is an equivalence class $[\nabla]$ of torsion-free connections on $TN$ having the same unparametrised geodesics. Canonically associated to a projective surface $(N,[\nabla])$ is a rank $2$ affine bundle $M \to N$ which is modelled on $T^*N$ and which arises as the complement of a certain $\RP^1$-subbundle of the projectivised cotractor bundle $\PP(E)\to N$ of $(N,[\nabla])$. The aim of this paper is to canonically construct a pair $(g,\Omega)$ on $M$, consisting of a neutral signature anti-self-dual (ASD) Einstein metric $g$, as well as a symplectic form $\Omega$. The pair $(g,\Omega)$ is related by an endomorphism $I : TM \to TM$ whose square is the identity and hence it defines what is known as a bi-Lagrangian structure or almost para-K\"ahler structure on $M$. We construct the pair $(g,\Omega)$ by taking a $\mathrm{GL}(2,\R$)-quotient of the Cartan geometry associated to $(N,[\nabla])$ and in doing so, establish a one-to-one correspondence between projective vector fields on $(N,[\nabla])$ and sympletic Killing vector fields on $(M,g,\Omega)$. In addition, we observe that every Killing vector field of $(M,g)$ is symplectic with respect to $\Omega$ and hence the lift of a projective vector field on $(N,[\nabla])$. 

The sections of the affine bundle $M \to N$ are in one-to-one correspondence with the $[\nabla]$ representative connections and hence the choice of a representative connection $\nabla \in [\nabla]$ provides a diffeomorphism $T^*N \to M$. Pulling back the pair $(g,\Omega)$ with this diffeomorphism gives a pair $(g_{\nabla},\Omega_{\nabla})$ on $T^*N$ which -- in canonical local coordinates $(x^i,\xi_i)$ on the contangent bundle $\nu : T^*N \to N$ -- takes the form
\begin{align*}
g_{\nabla}&=\left(d\xi_i-\left(\Gamma_{ij}^k \xi_k-\xi_i\xi_j-R_{(ij)}\right)\d x^j\right)\odot \d x^i,\\
\Omega_{\nabla}&=\d \xi_i\wedge \d x^i+\frac{1}{3}R_{[ij]}\d x^i\wedge \d x^j. 
\end{align*}
Here $\Gamma^i_{jk}$ denote the Christoffel symbols and $R_{(ij)},R_{[ij]}$ the components of the symmetric -- and anti-symmetric part of the Ricci curvature of $\nabla$ with respect to the coordinates $(x^i)$. The first two summands in the coordinate expression for the metric $g_{\nabla}$ give the classical Patterson-Walker metric $h_{\nabla}$ which is canonically defined on $T^*N$ from a torsion-free connection $\nabla$ on $N$. The metric $g_{\nabla}$ is thus part of a one-parameter family $g_{\nabla,\Lambda}$ of metrics on $T^*N$ defined by
\begin{equation}\label{main_metric}
g_{\nabla,\Lambda}=h_{\nabla}+\Lambda \lambda^2+\left(\frac{1}{\Lambda}\right)\nu^*\mathrm{Ric}^+(\nabla),
\end{equation}
where $\lambda$ denotes the tautological $1$-form of $T^*N$ and $\Lambda$ is any non-zero real number. The family of metrics $g_{\nabla,\Lambda}$ already appeared in~\cite{spanish} where they are locally characterized as the neutral signature type II Osserman metrics whose Jacobi operator have non-zero eigenvalues. However, the relation of the metric $g_{\nabla}=g_{\nabla,1}$ to projective differential geometry is not noted there. The reader may also consult~\cite{derdzinski2,MR2211331} and references cited therein for results about the classification of neutral signature four-dimensional Osserman metrics. All the metrics in the family $g_{\nabla,\Lambda}$ are anti-self-dual and Einstein with scalar curvature $24\Lambda\neq 0$. Moreover, in Theorem \ref{theo_asd} we show that all ASD Einstein metrics which admit a parallel ASD totally null distributions are locally of the form (\ref{main_metric}). We also observe that if a connection $\nabla$ has skew-symmetric Ricci tensor, then the limit $\Lambda \to 0$ of the above family $g_{\nabla,\Lambda}$ yields an anti-self-dual Ricci flat metric which previously appeared in the work of Derdzi\'nski \cite{derdzinski}. 

In the final part of the article we relate the metric $g$ to a certain gauge-theoretic equation introduced by Calderbank in~\cite{Cal1}. We also discuss some examples. 

This paper mainly concerns itself with the two-dimensional case, but there are obvious higher dimensional generalisations which we briefly discuss in an Appendix.  

\subsection*{Acknowledgments} The authors wish to thank
Andreas \v{C}ap, Andrzej Derdzi\'nski, Nigel Hitchin, and Claude LeBrun for helpful discussions regarding the contents of this paper. 
TM is grateful for travel support via the grant SNF 200020\_144438 of the Swiss National Science Foundation. 

\section{Preliminaries}

\subsection{Algebraic preliminaries}
As usual, we let $\mathbb{R}^n$ the denote the space of column vectors of height $n$ with real entries and $\mathbb{R}_n$ the space of row vectors of length $n$ with real entries. Matrix multiplication $\mathbb{R}_n \times \mathbb{R}^n \to \mathbb{R}$ is a non-degenerate pairing identifying $\R_n$ with the dual vector space of $\R^n$. 

Let $\mathbb{RP}^2=(\R^3\setminus\{0\})/\R^*$ denote space of lines in $\R^3$ through the origin, i.e., two-dimensional real projective space. For any nonzero $x \in \R^3$ let $[x]$ denote its corresponding point in $\mathbb{RP}^2$. Let $\mathbb{RP}_2=(\R_3\setminus\{0\})/\R^*$ denote the dual projective space and likewise for any nonzero $\xi \in \R_3$ we denote by $[\xi]$ its corresponding point in $\mathbb{RP}_2$.

The group $\mathrm{SL}(3,\R)$ acts from the left on $\R^3$ by matrix multiplication and this action descends to define a transitive left action on $\mathbb{RP}^2$. Likewise, $\mathrm{SL}(3,\R)$ acts on $\R_3$ from the left by the rule
$$
h \cdot \xi=\xi h^{-1}
$$
for $h \in \mathrm{SL}(3,\R)$ and this actions descends to define a transitive left action on $\mathbb{RP}_2$. The stabiliser subgroup of $[x_0] \in \mathbb{RP}^2$ where $x_0={}^t(1\;0\;0)$ will be denoted by $H$, so that $\mathbb{RP}^2\simeq \mathrm{SL}(3,\R)/H$. The elements of $H\subset \mathrm{SL}(3,\R)$ are matrices of the form
$$
b\rtimes a=\left(\begin{array}{cc} \det a^{-1} & b \\ 0 & a\end{array}\right),
$$
with $a \in \mathrm{GL}(2,\R)$ and $b \in \R_2$. Denoting by $\mathbb{RP}_1\subset \mathbb{RP}_2$ the projective line consisting of those elements $[\xi] \in \mathbb{RP}_2$ which satisfy $[\xi]\cdot [x_0]=0$, the group $H$ acts faithfully from the left by affine transformations on the affine $2$-space $\mathbb{A}_2=\mathbb{RP}_2\setminus \mathbb{RP}_1$. Indeed, if we represent an element in $\mathbb{A}_2$ by a vector $(1,\xi) \in \R_3$ with $\xi \in \R_2$, we obtain
$$
(1,\xi)\left(\begin{array}{cc} \det a^{-1} & b \\ 0 & a\end{array}\right)^{-1}=\left(\det a,-ba^{-1}\det a+\xi a^{-1}\right)
$$
so that the induced affine transformation is
$$
(b\rtimes a)\cdot \xi=\xi a^{-1}\det a^{-1}-ba^{-1}.
$$    
Consequently, we may naturally think of $H$ as the $2$-dimensional real affine group.

\subsection{Projective structures}
\label{projective_sec}

In this preliminary subsection we shall summarise basic facts about projective 
structures on a surface which underlie the results of the paper; the reader may consult~\cite{BEG} for additional details. 
Let $\mf$ be a connected smooth surface. By an affine torsion-free connection on $N$ we mean a torsion-free connection on its tangent bundle $TN$. The set of torsion-free connections on $TN$ is an affine space modelled on the smooth sections of the vector bundle $V=TN\otimes S^2(T^*N)$. We have a canonical trace mapping $V \to T^*N$ and an inclusion $$
\iota : T^*N \to V, \quad \nu \mapsto \nu\otimes\mathrm{Id}+\mathrm{Id}\otimes \nu. 
$$
Consequently, $V$ decomposes into a direct sum $V\simeq V_0\oplus T^*N$, where $V_0$ denotes the trace-free part of $V$. 

The curvature $R^{\nabla}$ of the connection $\nabla$ is defined by 
$$
R^{\nabla}(X,Y)Z=\nabla_X\nabla_YZ-\nabla_Y\nabla_X Z-\nabla_{[X,Y]}Z
$$
for all vector fields $X,Y,Z$ on $N$. We define the Ricci curvature of $\nabla$ to be
$$
\mathrm{Ric}(\nabla)(X,Y)=\tr\left(Z\to R^{\nabla}(Z,X)Y\right)
$$
for all vector fields $X,Y$ on $N$.\footnote{This definition is common in projective differential geometry, but differs from the more standard definition, where the Ricci curvature is defined as $\mathrm{Ric}(\nabla)(X,Y)=\tr\left(Z\to R^{\nabla}(Z,Y)X\right)$.}  The Ricci curvature need not be symmetric and we denote by $\mathrm{Ric}^{\pm}(\nabla)$ its symmetric and anti-symmetric part.   

A vector field $K$ defined on some open set $U\subset N$ is called~\textit{affine} for the torsion-free connection $\nabla$ on $TN$ if its local flow $\phi_t$ preserves the geodesics of $\nabla$. The set of such vector fields on $U$ is a Lie subalgebra of the Lie algebra of vector fields on $U$ which we will denote by $\mathcal{A}_\nabla(U)$. Clearly, $K \in \mathcal{A}_{\nabla}(U)$ if and only if 
\begin{equation}\label{affvec}
0=\mathcal{L}_{K}\nabla:=\lim_{t\to 0} \frac{1}{t}\left(\phi_t^*\nabla-\nabla\right)
\end{equation}
on $U$. A straightforward computation yields that~\eqref{affvec} is equivalent to the vanishing of the symmetric part of $\nabla^2 K$. By definition, the map $K \mapsto \mathcal{L}_{K}\nabla$ takes values in $\Gamma(V)$ and hence defines a second order linear differential operator $\mathcal{L}^{\nabla} : \Gamma(TN) \to \Gamma(V)$.

A projective structure $[\nabla]$ on $\mf$ is an equivalence class
of torsion--free connections on $T\mf$, where two such connections $\hat{\nabla}$ and $\nabla$ are called~\textit{projectively equivalent} if they share the same unparametrised geodesics. By a classical result of Weyl~\cite{Weyl} this is equivalent to $\hat{\nabla}-\nabla$ being pure trace, that is, the existence of a $1$-form $\Upsilon$ on $N$ such that
\begin{equation}\label{connection_change}
\hat{\nabla}_XY=\nabla_XY+\Upsilon(X)Y+\Upsilon(Y)X,
\end{equation}
for all vector fields $X,Y$ on $N$. Consequently, the set of projective structures on $N$ is an affine space modelled on the smooth sections of $V_0$. 

Using index notation, the projective Schouten tensor $\Rho$ of $\nabla$ is defined by 
$$
\Rho_{ij}=R_{(ij)}+\frac{1}{3}R_{[ij]},
$$
where $R_{(ij)}$ denotes the symmetric part -- and $\mathrm{R}_{[ij]}$ the anti-symmetric part of the Ricci curvature of $\nabla$. 
If we change the connection in the
projective class using (\ref{connection_change}) then
\be
\label{schouten_change}
\hat{\Rho}_{ij}={\Rho}_{ij}-\nabla_i\Upsilon_j+\Upsilon_i\Upsilon_j, \quad
\hat{\Rho}_{[ij]}=\Rho_{[ij]}-\nabla_{[i}\Upsilon_{j]}.
\ee

A vector field $K$ defined on some open set $U\subset N$ is said to be~\textit{projective} for $[\nabla]$ if its local flow $\phi_t$ preserves the unparametrised geodesics of $[\nabla]$. The set of such vector fields on $U$ is a Lie subalgebra of the Lie algebra of vector fields on $U$ which we will denote by $\mathcal{P}_{[\nabla]}(U)$. A vector field $K$ belongs to $\mathcal{P}_{[\nabla]}(U)$ if and only if 
\begin{equation}\label{projsymvect}
0=\mathcal{L}_{K}[\nabla]:=\left(\mathcal{L}_{K}\nabla\right)_0
\end{equation}
on $U$, where $\nabla \in [\nabla]$, and the explicit expression
for $\mathcal{L}_{K}\nabla$ is given by (\ref{formula_for_lie}).
 By definition, the right hand side of~\eqref{projsymvect} is a smooth section of $V_0$ so that the map $K \mapsto \mathcal{L}_{K}[\nabla]$ defines a second order linear differential operator $\mathcal{L}^{[\nabla]} : \Gamma(TN) \to \Gamma(V_0)$.

If $N$ is orientable, we may restrict attention to connections in $[\nabla]$ which preserve an area form $\epsilon$ on $N$,  
so that $\nabla\epsilon=0$. We shall refer to such 
connections as {\em special} \cite{EM}. Note that special connections always exist globally. For special connections
the Schouten tensor is symmetric, that is $\Rho_{[ij]}=0$.
The residual freedom in special connections within 
a given projective class is given
by (\ref{connection_change}) where $\Upsilon=\d f$ for some smooth real-valued function $f$ on $N$. The special condition 
is preserved if  $\hat{\epsilon}=e^{3f}\epsilon$.

\subsection{The Cartan geometry of a projective surface}\label{cartgeomsec}

In~\cite{MR1504846} (see also~\cite{kobanag} for a modern reference), Cartan associates to a projective structure $[\nabla]$ on a smooth surface $\mf$ a Cartan geometry $(\pi : \projb \to \mf,\theta)$ of type $(\mathrm{SL}(3,\R),H)$ which consists of a principal right $H$-bundle $\pi : \projb \to \mf$ together with a Cartan connection $\theta\in \Omega^1(\projb,\mathfrak{sl}(3,\R))$ having the following properties:
\begin{itemize}
\item[(i)] $\theta(\bX_v)=v$ for fundamental vector field $\bX_v$ on $\projb$;
\item[(ii)] $\theta_u : T_u\projb\to \mathfrak{sl}(3,\R)$ is an isomorphism for all $u \in \projb$;
\item[(iii)] $R_h^*\theta=\mathrm{Ad}(h^{-1})\theta=h^{-1}\theta h$ for all $ h\in H$;  
 \item[(iv)] write
$$
\theta=\left(\begin{array}{cc} -\tr \phi & \eta \\ \omega & \phi\end{array}\right)
$$
for an $\R^2$-valued $1$-form $\omega=(\omega^i)$, an $\R_2$-valued $1$-form $\eta=(\eta_i)$ and a $\mathfrak{gl}(2,\R)$-valued $1$-form $\phi=(\phi^i_j)$. If $\bX_{x}$ is a vector field on $\projb$ having the property that
$$
\omega(\bX_{x})=x, \quad \eta(\bX_{x})=0, \quad \phi(\bX_{x})=0, 
$$
for some non-zero $x \in \R^2$, then the the integral curve of $\bX_{x}$, when projected to $\mf$, becomes a geodesic of $[\nabla]$ and conversely every geodesic of $[\nabla]$ arises in this way;  
\item[(v)] The curvature $2$-form $\Theta$ satisfies 
\begin{equation}\label{struceqproj}
\Theta=\d \theta+\theta\wedge\theta=\left(\begin{array}{cc} 0 & L(\omega\wedge\omega) \\ 0 & 0\end{array}\right),
\end{equation}
for a smooth curvature function $L : \projb \to \mathrm{Hom}\left(\R^2\wedge\R^2,\R_2\right)$.
\end{itemize} 
Note the Bianchi-identity
$$
\d \Theta=\Theta\wedge\theta-\theta\wedge\Theta,
$$
the algebraic part of which reads
\begin{equation}\label{algbianchi}
0=L(\omega\wedge\omega)\wedge\omega. 
\end{equation}
A projective structure $[\nabla]$ is called~\textit{flat} if locally $[\nabla]$ is defined by a flat connection. A consequence of Cartan's construction is that a projective structure is flat if and only if $L$ vanishes identically.

\begin{rmk}
Cartan's bundle is unique in the following sense: If $(\hat{\pi} : \hat{P}_{[\nabla]} \to \mf,\hat{\theta})$ is another Cartan geometry of type $(\mathrm{SL}(3,\R),H)$ satisfying the properties (iii),(iv),(v), then there exists a $H$-bundle isomorphism $\psi : \projb \to \hat{P}_{[\nabla]}$ so that $\psi^*\hat{\theta}=\theta$.
\end{rmk}

\begin{rmk}
Let $w$ be any real number. The line bundle associated to $\projb$ via the $H$-representation $\chi_w : H \to \mathrm{GL}^+(1,\R)$, $b\rtimes a \mapsto |\!\det a|^w$ will be denoted by $\mathcal{E}(w)$. Following~\cite{BEG}, we call its sections~\textit{densities of projective weight} $w$. In particular, nowhere vanishing sections of $\mathcal{E}(1)$ are known as~\textit{projective scales}.
\end{rmk}

\subsection{The choice of a representative connection}

For what follows it is necessary to have an explicit construction of the Cartan geometry $(\pi : \projb \to N,\theta)$ of a projective surface $(N,[\nabla])$. This can be achieved conveniently by fixing a representative connection $\nabla \in [\nabla]$. To this end let $\upsilon : F \to \mf$ denote the coframe bundle of $\mf$ whose fibre at at point $p \in \mf$ consists of the linear isomorphisms $u : T_p\mf \to \R^2$. The group $\mathrm{GL}(2,\R)$ acts transitively from the right on each $\upsilon$-fibre by the rule $R_a(u)=u \cdot a=a^{-1}\circ u$ for all $a \in \mathrm{GL}(2,\R)$. This action turns $\upsilon : F \to \mf$ into a principal right $\mathrm{GL}(2,\R)$-bundle. The bundle $F \to \mf$ is equipped with a tautological $\R^2$-valued $1$-form $\omega=(\omega^i)$ satisfying the equivariance property $(R_a)^*\omega=a^{-1}\omega$, where the $1$-form $\omega$ is defined by $\omega_u=u\circ \upsilon^{\prime}_{u}$.

Suppose $\varphi=(\varphi^i_j) \in \Omega^1(F,\mathfrak{gl}(2,\R))$ is the connection $1$-form of $\nabla \in [\nabla]$, then we have the structure equations
\begin{align}
\label{eq:struceqlincon}\d \omega^i&=-\varphi^i_j\wedge\omega^j,\\
\label{formula_for_rho}\d\varphi^k_l+\varphi^k_j\wedge\varphi^j_l&=\frac{1}{2}\left(\delta_i{}^k\Rho_{jl}-\delta_j{}^k\Rho_{il} -2\Rho_{[ij]}\delta_l{}^k\right)\omega^i\wedge\omega^j,
\end{align}
where -- by slight abuse of notation -- the $\R_2\otimes \R_2$-valued map $\Rho=(\Rho_{ij})$ represents the Schouten tensor of $\nabla$. We define a right $H$-action on $F\times \R_2$ by the rule
$$
(u,\xi)\cdot (b\rtimes a)=\left(\det a^{-1} a^{-1} \circ u,\xi a \det a -b\det a\right),
$$
for all $b\rtimes a \in H$ and $(u,\xi) \in F\times\R_2$. Denoting by $\pi : F\times \R_2 \to N$ the basepoint projection of the first factor, this action turns $\pi : F\times \R_2 \to \mf$ into a principal right $H$-bundle over $\mf$. On $F\times \R_2$ we define the $\mathfrak{sl}(3,\R)$-valued $1$-form
\begin{equation}\label{eq:cartanconrepcon}
\theta=\left(\begin{array}{cc} -\frac{1}{3}\tr \varphi+\xi \omega & -\d \xi+\xi\varphi-\omega^t\Rho^t-\xi\omega\xi\\  \omega & \varphi-\frac{1}{3}\mathrm{I}\tr \varphi-\omega\xi  \end{array}\right).
\end{equation}
Then $(\pi : F \times \R_2 \to \mf,\theta)$ is a Cartan geometry of type $(\mathrm{SL}(3,\R),H)$ satisfying the properties (iii) to (v) for the projective structure defined by $\nabla$. It follows from the uniqueness part of Cartan's construction that $(\pi : F \times \R_2\to\mf,\theta)$ is isomorphic to the Cartan geometry of $(N,[\nabla])$.

\subsection{The Patterson-Walker metric}

In~\cite{patwalk}, Patterson and Walker use an affine torison-free connection $\nabla$ on a smooth manifold to construct a split-signature metric on its cotangent bundle. Here we briefly review their construction for the case of a surface $N$. As before, let $\upsilon : F \to N$ denote the coframe bundle of $N$ with tautological $1$-form $\omega$ and let $\varphi$ denote the connection form of $\nabla$. The cotangent bundle $\nu : T^*N \to N$ is the bundle associated to the $\mathrm{GL}(2,\R)$-representation $\chi$ on $\R_2$ defined by the rule
$
\chi(a)\xi=\xi a^{-1}
$ 
for all $a \in \mathrm{GL}(2,\R)$ and $\xi \in \R_2$. The $1$-forms on $F\times \R_2$ that are semi-basic for the projection $\zeta : F\times \R_2 \to T^*N\simeq (F\times \R_2)/\sim_{\chi}$ are spanned by the components of $\omega$ and $\d \xi-\xi\varphi$. In particular, the equivariance properties of $\omega,\theta$ and $\xi$ imply that the tensor field $\left(\d \xi-\xi\varphi\right)\omega=\left(\d\xi_i-\xi_k\varphi^k_i\right)\otimes\omega^i$ is invariant under the $\mathrm{GL}(2,\R)$-right action,
$$
(R_a)^*\left(\d \xi-\xi\varphi\right)\omega=\left( \d \xi a-\xi a a^{-1}\varphi a\right)a^{-1}\omega=\left(\d \xi-\xi\varphi\right)\omega. 
$$
It follows that there exists a unique split-signature metric $h_{\nabla}$ and a unique $2$-form $-\Omega_0$ on $T^*N$ such that
$$
\zeta^*h_{\nabla}=\left(\d\xi_i -\xi_k\varphi^k_i\right)\circ \omega^i\quad \text{and}\quad \zeta^*\Omega_0=-\left(\d\xi_i -\xi_k\varphi^k_i\right)\wedge \omega^i.
$$
Note that the $1$-form $\xi\omega$ is semi-basic for the projection $\zeta$ and invariant under the $\mathrm{GL}(2,\R)$-right action, hence the pullback of a unique $1$-form $\lambda$ on $T^*N$ which is of course the tautological $1$-form (or Liouville $1$-form) of $T^*N$. The structure equation~\eqref{eq:struceqlincon} gives
$$
-\d (\xi_i\omega^i)=-\d\xi_i\wedge\omega^i+\xi_k\varphi^k_i\wedge\omega^i,
$$
hence $\Omega_0=-\d \lambda$ is just the canonical symplectic form of $T^*N$ and independent of $\nabla$. The metric $h_{\nabla}$ does however depend on $\nabla$ and is called the~\textit{Patterson-Walker metric} or the~\textit{Riemannian extension} of $\nabla$. In canonical local coordinates $(x^i,\xi_i)$ on an open subset of the cotangent bundle it takes the form
\begin{equation}\label{walker}
h_{\nabla}=d\xi_i\odot dx^i-\Gamma_{ij}^k\;\xi_k\; dx^i\odot dx^j,
\end{equation}
where $\Gamma^i_{jk}$ denote the Christoffel symbols of $\nabla$ with respect to the coordinates $(x^i)$.

\subsection{Anti-self-duality}
Let $M$ be an oriented four--dimensional manifold with a metric
$g$ of signature $(2, 2)$. The Hodge $\ast$ operator is an
involution on two-forms, and induces a decomposition
\be \label{splitting}
\Lambda^{2}(T^*M) = \Lambda_{+}^{2}(T^*M) \oplus \Lambda_{-}^{2}(T^*M)
\ee
of two-forms
into self-dual (SD)
and anti-self-dual (ASD)  components, which
only depends on the conformal class of $g$. 
The Riemann tensor of $g$ 
has the symmetry $R_{abcd}=R_{[ab][cd]}$ so can be thought of
as a map $\mathcal{R}: \Lambda^{2}(T^*M) \rightarrow \Lambda^{2}(T^*M)$
which admits a decomposition   under (\ref{splitting}):
\be \label{decomp}
{\mathcal R}=
\left(
\mbox{
\begin{tabular}{c|c}
&\\
$C_+-2\Lambda$&$\phi$\\ &\\
\cline{1-2}&\\
$\phi$ & $C_--2\Lambda$\\&\\
\end{tabular}
} \right) .
\ee
Here $C_{\pm}$ are the SD and ASD parts 
of the (conformal) Weyl tensor, $\phi$ is  the
trace-free Ricci curvature, and $-24\Lambda$ is the scalar curvature which acts
by scalar multiplication. 
The metric $g$ is ASD if $C_{+}=0$. It is ASD and Einstein if 
$C_{+}=0$ and $\phi=0$. Finally it is ASD Ricci--flat 
(or equivalently hyper-symplectic) if $C_{+}=\phi=\Lambda=0$. In this case
the Riemann tensor is also anti-self-dual.
\vskip3pt
Locally there exist real rank-two vector bundles $\spp, \spp'$  (spin-bundles) over $M$ equipped with parallel symplectic structures
$\ep, \ep'$ such that
\be
\label{can_bun_iso}
T M\cong {\spp}\otimes {\spp'}
\ee
is a  canonical bundle isomorphism, and
\[
g(v_1\otimes w_1,v_2\otimes w_2)
=\varepsilon(v_1,v_2)\varepsilon'(w_1, w_2)
\]
for $v_1, v_2\in \Gamma(\spp)$ and $w_1, w_2\in \Gamma(\spp')$.
A vector $V\in \Gamma(TM)$ is called null if $g(V, V)=0$. Any null vector is of the form
$V=\lambda \otimes \pi$ where $\lambda$, and $\pi$ are sections of
$\spp$ and $\spp'$ respectively.
An $\alpha$--plane (respectively a $\beta$--plane) 
is a two--dimensional plane in $T_pM$
spanned by null vectors of the above form with $\pi$ (respectively
$\lambda$) fixed, and
an $\alpha$--surface ($\beta$--surface) is a two--dimensional surface in $\zeta\subset M$ such 
that its tangent plane at every point is an $\alpha$--plane ($\beta$--plane). The 
seminal theorem  of Penrose \cite{penrose} states that   a 
maximal, three dimensional, family of $\alpha$--surfaces exists 
in $M$ iff $C_+=0$.

\section{From projective to bi-Lagrangian structures}

In this section we show how to canonically construct a bi-La\-grang\-ian structure on the total space of a certain rank $2$ affine bundle over a projective surface $(N,[\nabla])$. Recall that the group $H$ also acts faithfully on $\R_2$ by affine transformations defined by the rule
\begin{equation}\label{eq:affineaction}
\left(b\rtimes a\right)\cdot \xi=\xi a^{-1}\det a^{-1}-ba^{-1}
\end{equation}
for all $\xi \in \R_2$ and $b\rtimes a \in H$. Therefore, the bundle associated to $\projb$ via this affine $H$-action is a rank-$2$ affine bundle $M\to \mf$. We will refer to $M$ as the~\textit{canonical affine bundle} of $(\mf,[\nabla])$.

By definition, an element of $M$ is an equivalence class $[u,\xi]$ with $u\in \projb$ and $\xi \in \R_2$ subject to the equivalence relation
$$
(u_1,\xi_1)\sim (u_2,\xi_2) \quad \iff \quad u_2=u_1\cdot b\rtimes a\;\;\land \;\; \xi_2=(b\rtimes a)^{-1}\cdot \xi_1, \quad b\rtimes a \in H.  
$$  
Clearly, every element of $M$ has a representative $(u,0)$, unique up to a $\mathrm{GL}(2,\R)$ transformation, where here $\mathrm{GL}(2,\R)\subset H$ consists of those elements $b\rtimes a \in H$ satisfying $b=0$. For simplicity of notation, we will henceforth write $a$ instead of $0\rtimes a$ for the elements of $\mathrm{GL}(2,\R)\subset H$. It follows that as a smooth manifold, $M$ is canonically diffeomorphic to the quotient $\projb/\mathrm{GL}(2,\R)$ and we let $\mu : \projb \to M$ denote the quotient projection.
\begin{rmk}
It can be shown that the sections of $M \to N$ are in one-to-one correspondence with the $[\nabla]$-representative connections. The submanifold geometry in $M$ of representative connections is studied in depth in two articles by the second author~\cite{tm15,tm16}.
\end{rmk}
We use the standard fact that the tangent bundle of $N$ is the bundle associated to $\projb$ via the natural $H$-action on $\mathfrak{sl}(3,\R)/\mathfrak{h}$ induced by the adjoint representation of $H$ on its Lie algebra $\mathfrak{h}$. An element in the Lie algebra $\mathfrak{sl}(3,\R)$ of $\mathrm{SL}(3,\R)$ can be written as
$$
m_{x,\xi,\alpha}=
\begin{pmatrix}
-\tr \alpha & \xi \\ x & \alpha
\end{pmatrix} ,
$$
where $x \in \R^2,\xi\in\R_2, \alpha\in \mathfrak{gl}(2,\R)$ and $\mathfrak{h}$ consists of those elements for which $x=0$. Therefore, the elements in the quotient $\mathfrak{sl}(3,\R)/\mathfrak{h}\simeq \R^2$ are uniquely represented by matrices of the form $m_{x,0,0}$. Hence the induced action of $H$ is
\begin{equation}\label{eq:inducedaction}
\left(b\rtimes a\right)
\begin{pmatrix}
0 & 0 \\ x & 0 
\end{pmatrix}\left(b\rtimes a\right)^{-1}=\begin{pmatrix} 0 & 0 \\(\det a)a x & 0\end{pmatrix}\quad \text{mod} \;\mathfrak{h}. 
\end{equation}
In particular, since the cotangent bundle of $\mf$ is the bundle associated to the representation $\chi : H \to \mathrm{GL}(\R_2)$ which is dual to the representation defined by~\eqref{eq:inducedaction}, it follows that $\chi$ is defined by the rule
$$
\chi(b\rtimes a)\xi=\xi a^{-1}\det a^{-1},
$$
for all $\xi \in \R_2$ and $b\rtimes a \in H$.

Since $\chi$ is precisely the linear part of the affine $H$-action~\eqref{eq:affineaction}, we see that the affine bundle $M \to N$ is modelled on the cotangent bundle of $N$. 

\subsection{A bundle embedding}\label{subsec:bundleembedding}

It turns out that we can embed $\projb \to M$ as subbundle of the coframe bundle $F\to M$ of $M$. Here, we define a coframe at $p \in M$ to be a linear isomorphism $T_pM \to \R_2\oplus \R^2$ and we denote the tautological $\R_2\oplus\R^2$-valued $1$-form on $F$ by $\zeta$. 

By definition of $M$, a vector field $\bX$ on $M$ is represented by a unique $(\R_2\oplus \R^2)$-valued function $(X_+,X_-)$ on $\projb$ satisfying the equivariance condition
\begin{equation}\label{anothereq}
R_{a}^*X_+=X_+ a\det a, \quad R_{a}^*X_-=(\det a^{-1})a^{-1}X_-. 
\end{equation}
Therefore, we obtain a unique map $\psi : \projb \to F$ having the property that for every vector field $\bX$ on $M$ and for all $u \in \projb$  
$$
\psi(u)(\bX(\mu(u)))=(X_+(u),X_-(u)),
$$
where $(X_+,X_-)$ is the function on $\projb$ representing $\bX$. Clearly, $\psi$ is a smooth embedding.  Furthermore, from~\eqref{anothereq} we obtain
$$
\psi(u\cdot a)=\psi(u)\cdot\chi(a)
$$
where $\chi : H\ni \mathrm{GL}(2,\R) \to \mathrm{Aut}(\R_2\oplus\R^2)$ is the Lie group embedding defined by the rule
$$
\chi\left(a\right)(\xi,x)=\left(\xi a \det a,(\det a^{-1})a^{-1} x\right).
$$
Consequently, the pair $(\psi,\chi)$ embeds $\projb \to M$ as a subbundle of the coframe bundle of $M$ whose structure group is isomorphic to $\mathrm{GL}(2,\R)$. Furthermore, unraveling the definition of $\zeta$, it follows that we have
\begin{equation}\label{pullbackcanonical}
\psi^*\zeta=(\eta,\omega). 
\end{equation}
The induced geometric structure on $M$ defined by the reduction of the coframe bundle of $M$ is a bi-La\-grang\-ian structure, so we will study these structures next. 
 
\subsection{Bi-La\-grang\-ian structures}

A \textit{bi-La\-grang\-ian} structure on smooth $4$-mani\-fold $M$ (or more generally an even dimensional manifold) consists of a symplectic structure $\Omega$ together with a splitting of the tangent bundle of $M$ into a direct sum of $\Omega$-La\-grang\-ian subbundles $E_{\pm}$
$$
TM= E_+\oplus E_-. 
$$
A manifold equipped with a bi-La\-grang\-ian structure will be called a bi-La\-grang\-ian manifold. The endomorphism $I : TM \to TM$ defined by
$$
\bv=\bv_++\bv_-\mapsto \bv_+-\bv_-, \quad \bv \in TM, \bv_{\pm} \in E_{\pm}
$$
is $\Omega$-skew-symmetric, satisfies $I^2=\mathrm{Id}$ and its $\pm 1$-eigenbundle is $E_{\pm}$. Clearly, $I$ is the unique endomorphism of the tangent bundle having these properties and therefore, we may equivalently think of a bi-La\-grang\-ian structure as a pair $(\Omega,I)$ consisting of a symplectic structure $\Omega$ and a $\Omega$-skew-symmetric endomorphism $I : TM \to TM$ whose square is the identity.

Note also, that we may use the pair $(\Omega,I)$ to define a pseudo-Riemannian metric 
$$
g(\bv,\bw)=\Omega(\bv,I(\bw)), \quad \bv,\bw \in TM, 
$$
whose signature is $(2,2)$ and for which $I$ is skew-symmetric. Of course, a bi-La\-grang\-ian structure is also equivalently described in terms of the pair $(g,I)$ or the pair $(g,\Omega)$. 

\begin{rmk}
What we call a bi-La\-grang\-ian structure is also referred to as an~\textit{almost para-K\"ahler structure} and a~\textit{para-K\"ahler structure} provided $E_{\pm}$ are both Frobenius integrable. Note that in~\cite{bryantboch} the term bi-La\-grang\-ian structure is reserved for the case where both $E_{\pm}$ are Frobenius integrable. 
\end{rmk} 

\begin{rmk}
We call a vector field defined on some open subset $U\subset (M,\Omega,I)$~\textit{bi-La\-grang\-ian} if its (local) flow preserves both $\Omega$ and $I$. The set of such vector fields on $V$ is a Lie subalgebra of the Lie algebra of vector fields on $V$ which we will denote by $\mathcal{B}_{(\Omega,I)}(U)$.
\end{rmk}

A bi-La\-grang\-ian structure admits an interpretation as a reduction of the structure group of the coframe bundle of $M$. To this end consider the symmetric bilinear form of signature $(2,2)$ on $\R_2\oplus \R^2$
$$
\left\langle (\xi_1,x^1),(\xi_2,x^2)\right\rangle=-\frac{1}{2}\left(\xi_1x^2+\xi_2x^1\right)
$$
and the skew-symmetric non-degenerate bilinear form
$$
\rangle(\xi_1,x^1),(\xi_2,x^2)\langle=\frac{1}{2}\left(\xi_1x^2-\xi_2x^1\right).
$$
The two bilinear forms are related by the endomorphism $\iota$ sending $(\xi,x) \mapsto (\xi,-x)$. The endomorphism $\iota$ satisfies $\iota^2=\mathrm{Id}$ and its $1$-eigenspace is $\R_2\oplus \{0\}$ and its $-1$-eigenspace is $\{0\}\oplus \R^2$. By construction, both eigenspaces are null and La\-grang\-ian, that is, both bilinear forms vanish identically when restricted to the $\iota$-eigenspaces. The group $\mathrm{GL}(2,\R)$ acts from the left on $\R_2\oplus \R^2$ by
$$
a\cdot (\xi,x)=\left(\xi a^{-1},ax\right)
$$
and this action preserves both bilinear forms. We henceforth identify $\mathrm{GL}(2,\R)$ with its image subgroup in $\mathrm{Aut}( \R_2\oplus \R^2)$. In fact, $\mathrm{GL}(2,\R)$ is the largest subgroup of $\mathrm{Aut}(\R_2\oplus \R^2)$ preserving both bilinear forms.   

Given a bi-La\-grang\-ian structure $(\Omega,I)$ on $M$ we say that a coframe $u$ at $p \in M$ is~\textit{adapted} to $(\Omega,I)$ if for all $\bv, \bw\in T_pM$ 
$$
\Omega_p(\bv,\bw)=\rangle u(\bv),u(\bw)\langle \quad \text{and}\quad \left(u\circ I\right)(\bv)=\left(\iota\circ u\right)(\bv). 
$$
The set of all coframes of $M$ adapted to $(\Omega,I)$ defines a reduction $\lambda : \bilb \to M$ of the coframe bundle $F \to M$ of $M$ with structure group $\mathrm{GL}(2,\R)$. Conversely, every reduction of the coframe bundle of $M$ with structure group $\mathrm{GL}(2,\R)$ defines a unique pair $(\Omega,I)$, consisting of a non-degenerate $2$-form on $M$ and a $\Omega$-skew symmetric endomorphism $I : TM \to TM$ whose square is the identity. Note however that $\Omega$ need not be closed.   

The tautological $\R_2\oplus\R^2$-valued $1$-form $\zeta$ on $\bilb$ will be written as $\zeta=(\eta,\omega)$, so that $\eta=(\eta_i)$ is an $\R_2$-valued $1$-form on $\bilb$ and $\omega=(\omega^i)$ is an $\R^2$-valued $1$-form on $\bilb$. By construction, we have
$$
\lambda^*\Omega=-\eta\wedge\omega:=-\eta_i\wedge\omega^i.
$$
Furthermore, let $\hat{L}_{\pm}=\left(\lambda^{\prime}\right)^{-1}(E_{\pm})\subset T\bilb$, then the subbundle $\hat{L}_+$ is defined by the equations $\eta=0$ and the subbundle $\hat{L}_-$ is defined by the equations $\omega=0$.

A linear connection on $F$ is said to be adapted to $(\Omega,I)$ if it pulls back to $\bilb$ to become a principal $\mathrm{GL}(2,\R)$-connection on $\bilb$. An adapted connection is given by a $\mathfrak{gl}(2,\R)$-valued equivariant $1$-form $\nu$ on $\bilb$ such that
$$
\aligned
\d\eta&=-\eta\wedge\nu+\frac{1}{2}T_+\left((\eta,\omega)\wedge(\eta,\omega)\right),\\
\d \omega&=-\nu\wedge\omega+\frac{1}{2}T_-\left((\eta,\omega)\wedge(\eta,\omega)\right),\\
\endaligned
$$
for some torsion map $T_+$ on $\bilb$ with values in $\mathrm{Hom}(\Lambda^2(\R_2\oplus \R^2),\R_2)$ and some torsion map $T_-$ on $\bilb$ with values in $\mathrm{Hom}(\Lambda^2(\R_2\oplus \R^2),\R^2)$, both of which are equivariant with respect to the $\mathrm{GL}(2,\R)$ right action. It is an easy exercise in linear algebra to check that for every bi-La\-grang\-ian structure there exists a unique adapted connection $\nu$ so that
\begin{equation}\label{firststruceqbil}
\aligned
\d\eta&=-\eta\wedge\nu+\frac{1}{2}T_+\left(\omega\wedge\omega\right),\\
\d \omega&=-\nu\wedge\omega+\frac{1}{2}T_-\left(\eta\wedge\eta\right),\\
\endaligned
\end{equation}
with $T_+$ taking values in $\mathrm{Hom}(\Lambda^2\R^2,\R_2)$ and $T_-$ taking values in $\mathrm{Hom}(\Lambda^2\R_2,\R^2)$. It follows that $E_{\pm}$ is integrable if and only if $T_{\pm}$ vanishes identically. Furthermore, the identity $\d(\eta\wedge\omega)=0$ implies
$$
T_+(\omega\wedge\omega)\wedge\omega=0\quad \text{and}\quad T_-(\eta\wedge\eta)\wedge\eta=0.
$$
 The linear connection $\nu$ on the bundle of adapted frames induces connections on the tensor bundles of $M$ in the usual way. By construction, the induced connection ${}^{\nu}\nabla$ on $TM$ is the unique (affine) connection  with torsion $\tau$ satisfying
$$
{}^{\nu}\nabla \Omega=0 \quad \text{and}\quad {}^{\nu}\nabla I=0 \quad\text{and}\quad \tau(X_+,X_-)=0, 
$$
for all $X_{\pm} \in \Gamma(E_{\pm})$. To the to best of our knowledge, the connection ${}^{\nu}\nabla$ was first studied by Libermann~\cite{libermann}, so we call $\nu$ the~\textit{Libermann connection}. Of course, if $\tau$ vanishes identically, then ${}^{\nu}\nabla$ is just the Levi-Civita connection of $g$. 

\subsection{From projective to bi-La\-grang\-ian structures}

Denoting by $\bilb$ the bundle of adapted coframes of a bi-La\-grang\-ian structure $(\Omega,I)$ and by $\projb$ the Cartan bundle of a projective structure $[\nabla]$, we obtain:
\begin{theo}\label{maincorrespondence}
Let $(\mf,[\nabla])$ be a projective surface with Cartan bundle $(\pi : \projb \to \mf,\theta)$. Then there exists a bi-La\-grang\-ian structure $(\Omega,I)$ on the quotient $M=\projb/\mathrm{GL}(2,\R)$ having the following property: There exists a $\mathrm{GL}(2,\R)$-bundle isomorphism $\psi : \projb \to \bilb$ so that
$$
\psi^*\left(\begin{array}{cc} -\frac{1}{3}\tr \nu & \eta \\ \omega & \nu-\frac{1}{3}\mathrm{Id}\tr \nu\end{array}\right)=\theta, 
$$
where $(\eta,\omega)$ denotes the tautological $1$-form on $\bilb$ and $\nu$ the Libermann connection. Moreover, the $E_-$-bundle of the bi-Lagrangian structure $(\Omega,I)$ is always Frobenius integrable and the $E_+$-bundle is Frobenius integrable if and only if $[\nabla]$ is flat.  
\end{theo}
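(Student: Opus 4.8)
The plan is to run everything through the bundle embedding $\psi : \projb \to F$ constructed in Section~\ref{subsec:bundleembedding}, which realises $\projb$ as a $\mathrm{GL}(2,\R)$-reduction of the coframe bundle of $M$ with $\psi^*\zeta=(\eta,\omega)$ by~\eqref{pullbackcanonical}. By the correspondence recalled above between $\mathrm{GL}(2,\R)$-reductions and bi-La\-grang\-ian structures, this reduction determines a unique pair $(\Omega,I)$ on $M$, and I would simply declare this to be the asserted structure. The first point to settle is that $\Omega$ is genuinely symplectic, i.e. closed. Writing $\mu=\lambda\circ\psi:\projb\to M$ for the projection, one has $\mu^*\Omega=-\eta_i\wedge\omega^i$. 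The $(1,1)$-entry of the structure equation~\eqref{struceqproj} reads $-\d\tr\phi+\eta_i\wedge\omega^i=0$, so $\mu^*\Omega=-\d\tr\phi$ is exact, whence $\d(\mu^*\Omega)=0$ and, $\mu$ being a surjective submersion, $\d\Omega=0$.

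The core of the argument is to identify the pullback of the Libermann connection. I would expand the off-diagonal blocks of~\eqref{struceqproj}. The bottom-left block gives $\d\omega^i=-\tr\phi\wedge\omega^i-\phi^i_j\wedge\omega^j$, which I rewrite as $\d\omega=-\tilde\nu\wedge\omega$ with $\tilde\nu:=\phi+\mathrm{Id}\,\tr\phi$; crucially there is no $\eta\wedge\eta$ term. The top-right block gives $\d\eta_i=\tr\phi\wedge\eta_i-\eta_j\wedge\phi^j_i+L(\omega\wedge\omega)_i$, which rearranges to $\d\eta=-\eta\wedge\tilde\nu+L(\omega\wedge\omega)$. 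Comparing with the defining equations~\eqref{firststruceqbil} of the Libermann connection, and noting that $L$ takes values in $\mathrm{Hom}(\Lambda^2\R^2,\R_2)$ so the torsion is of exactly the restricted type allowed there, I read off $T_-=0$ and $T_+=2L$. Since $\tilde\nu$ is $\mathfrak{gl}(2,\R)$-valued and inherits equivariance and the reproducing property from the Cartan axioms (iii), it is an adapted connection, so by the uniqueness of the Libermann connection one concludes $\psi^*\nu=\tilde\nu$.

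With $\psi^*\nu=\tilde\nu=\phi+\mathrm{Id}\,\tr\phi$ in hand, the matrix identity is a short algebraic check: the $(1,1)$-entry yields $\psi^*(-\tfrac13\tr\nu)=-\tfrac13\tr\tilde\nu=-\tr\phi$, the $(2,2)$-entry yields $\psi^*(\nu-\tfrac13\mathrm{Id}\,\tr\nu)=\tilde\nu-\mathrm{Id}\,\tr\phi=\phi$, and the off-diagonal entries reproduce $\eta$ and $\omega$ by~\eqref{pullbackcanonical}; altogether the pullback equals $\theta$. The integrability statements are then immediate from the torsion computation: $E_-$ is integrable because $T_-$ vanishes identically, while $E_+$ is integrable if and only if $T_+=2L$ vanishes, which by Cartan's characterisation recalled after~\eqref{algbianchi} happens precisely when $[\nabla]$ is flat.

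I expect the main difficulty to be bookkeeping rather than conceptual, namely matching the placement of the trace terms between the Cartan connection and the Libermann normalisation (the $\tfrac13\tr\nu$ corrections), and verifying that the torsion produced by $\tilde\nu$ is of exactly the restricted type $\Lambda^2\R^2\to\R_2$, $\Lambda^2\R_2\to\R^2$ needed to invoke uniqueness of $\nu$. Some care is also required to confirm that $\tilde\nu=\phi+\mathrm{Id}\,\tr\phi$ is a bona fide principal $\mathrm{GL}(2,\R)$-connection under $\psi$, which rests on the equivariance property (iii) of $\theta$ restricted to the subgroup $\mathrm{GL}(2,\R)\subset G$.
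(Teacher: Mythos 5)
Your argument is correct and follows the paper's proof essentially step for step: the same bundle embedding from \S\ref{subsec:bundleembedding}, the same identification $\psi^*\nu=\phi+\mathrm{Id}\,\tr\phi$ via the structure equations and the uniqueness of the Libermann connection, and the same reading-off of $T_-\equiv 0$ and $T_+\propto L$ for the two integrability claims. The only (equally valid) variation is your closedness argument for $\Omega$: you deduce $\eta_i\wedge\omega^i=\d\tr\phi$ from the vanishing $(1,1)$-entry of the curvature, whereas the paper computes $\d(\eta\wedge\omega)=L(\omega\wedge\omega)\wedge\omega$ and notes that this vanishes because $\mf$ is a surface.
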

\begin{proof}
We write
$$
\theta=\left(\begin{array}{cc} -\tr \phi & \hat{\eta} \\ \hat{\omega} & \phi\end{array}\right)
$$
for the Cartan connection on $\projb$. From \S\ref{subsec:bundleembedding} we know that we have an embedding $(\psi,\chi)$ of $\projb \to M$ as a $\mathrm{GL}(2,\R)$-subbundle $\lambda : \bilb \to M$ of the coframe bundle of $M$. Furthermore, $\psi$ satisfies 
$$
\psi^*(\eta,\omega)=(\hat{\eta},\hat{\omega}),
$$ 
where $(\eta,\omega)$ denotes the tautological $(\R_2\oplus\R^2)$-valued $1$-form on $\bilb$. Therefore, we obtain a unique non-degenerate $2$-form $\Omega$ on $M$ and a unique $\Omega$-skew-symmetric endomorphism $I : TM \to TM$ whose square is the identity. The $2$-form $\Omega$ pulled back to $\bilb$ becomes $-\eta\wedge \omega$. The structure equations~\eqref{struceqproj} imply that we have
\begin{equation}\label{struceqpart}
\aligned
\d \hat{\omega}&=-(\phi+\mathrm{I}\tr \phi)\wedge\hat{\omega},\\
\d \hat{\eta}&=-\hat{\eta}\wedge(\phi+\mathrm{I}\tr \phi)+L(\hat{\omega}\wedge\hat{\omega}). 
\endaligned
\end{equation}
In particular, we obtain
$$
\aligned
\d \left(\hat{\eta}\wedge\hat{\omega}\right)&=\left[-\hat{\eta}\wedge(\phi+\mathrm{I}\tr \phi)+L(\hat{\omega}\wedge\hat{\omega})\right]\wedge\hat{\omega}-\hat{\eta}\wedge\left[-(\phi+\mathrm{I}\tr \phi)\wedge\hat{\omega}\right]\\
&=L\left(\hat{\omega}\wedge\hat{\omega}\right)\wedge\hat{\omega}=0,
\endaligned
$$
where the last equality follows since $\mf$ is two-dimensional. This shows that $\Omega$ is symplectic, so that the pair $(\Omega,I)$ defines a bi-La\-grang\-ian structure on $M$. The equivariance properties of $\theta$ and~\eqref{struceqpart} imply that the $\psi$-pushforward of $\phi+\mathrm{I}\tr\phi$ is a principal right $\mathrm{GL}(2,\R)$-connection on $\bilb$ which satisfies~\eqref{firststruceqbil} with $T_-\equiv 0$ and $T_+=L\circ \psi^{-1}$. In particular, $E_-$ is always integrable and $E_+$ is integrable if and only if $L$ vanishes identically, that is, $[\nabla]$ is flat. Denoting by $\nu$ the Libermann connection of $(\Omega,I)$, we obtain from its uniqueness that
\begin{equation}\label{pullblibermann}
\psi^*\nu=\phi+\mathrm{I}\tr \phi, 
\end{equation}
which completes the proof. 
\end{proof}

\begin{rmk}
Recall that if $\bX_{x}$ is a vector field on $\projb$ having the property that
$$
\omega(\bX_{x})=x, \quad \eta(\bX_{x})=0, \quad \phi(\bX_{x})=0, 
$$
for some non-zero $x \in \R^2$, then the the integral curve of $\bX_{x}$, when projected to $\mf$, becomes a geodesic of $[\nabla]$. Conversely every geodesic of $[\nabla]$ arises in this way. Likewise, a geodesic of the Libermann connection arises as the projection of an integral curve of a horizontal vector field on $\bilb$ which is constant on the canonical $1$-form. It follows that the geodesics on $(\mf,[\nabla])$ correspond to the geodesics of the Libermann connection on $(M,\Omega,I)$ that are everywhere tangent to $E_-$.  
\end{rmk}

\subsection{A local coordinate descripition}

Recall from \S\ref{cartgeomsec} that the choice of a representative connection $\nabla \in [\nabla]$ gives a $H$-bundle isomorphism $\projb\simeq F \times \R_2$. In particular, we obtain a diffeomorphism $\psi_{\nabla} : (F\times\R_2)/\mathrm{GL}(2,\R) \to M$. By construction, the quotient $(F\times\R_2)/\mathrm{GL}(2,\R)$ is the cotangent bundle of $\mf$. Denoting the projection $ F\times \R_2 \to T^*\mf$ by $\mu$ as well, we obtain 
\begin{equation}\label{metrisymppreferred}
\aligned
(\psi\circ \mu)^*g&=-\left(-\d \xi+\xi\varphi-\Rho^t\omega-\xi\omega\xi\right)\odot \omega,\\
(\psi\circ \mu)^*\Omega&=\omega\wedge\left(-\d \xi+\xi\varphi-\Rho^t\omega-\xi\omega\xi\right),
\endaligned
\end{equation}
where the $\R_2\otimes \R_2$-valued map $\Rho=(\Rho_{ij})$ on $F$ represents the Schouten tensor of $\nabla$ and $\varphi$ the connection form of $\nabla$. Using~\eqref{metrisymppreferred}, we see that in terms of the Patterson-Walker metric $h_{\nabla}$ of $\nabla$ and the Liouville $1$-form $\lambda$ of $T^*N$, the metric can be expressed as
\begin{equation}\label{eq:metricchoicecon}
g_{\nabla}:=(\psi_{\nabla})^*g=h_{\nabla}+\lambda^2+\nu^*\mathrm{Ric}^+(\nabla)
\end{equation}
and for the symplectic form we obtain
$$
\Omega_{\nabla}:=(\psi_{\nabla})^*\Omega=-\Omega_0+\frac{1}{3}\nu^*\mathrm{Ric}^{-}(\nabla).
$$
In canonical local coordinates $(x^i,\xi_i)$ on $T^*N$, we thus have the expressions
\begin{equation}\label{coordexpmetsymp}
\aligned
g_{\nabla}&=\left(\d \xi_i\odot\d x^i -\left(\xi_l\Gamma^l_{ij}-\Rho_{(ij)}-\xi_i\xi_j\right)\d x^i\odot \d x^{j}\right),\\
\Omega_{\nabla}&=\d \xi_i \wedge \d x^i+\Rho_{[ij]}\d x^i\wedge \d x^j,
\endaligned
\end{equation}
where $\Gamma^i_{jk}$ denote the Christoffel symbols and $\Rho_{ij}$ the components of the Schouten tensor of $\nabla$ with respect to the coordinates $x^i$.

\begin{rmk}

Besides taking the quotient of the Cartan bundle by $\mathrm{GL}(2,\R)$, one might also consider the quotient by $\R^2\rtimes H$, where $H$ is the connected nonabelian real Lie group of dimension two. This quotient -- which is a formal analogue to the construction of the conformal Fefferman metrics~\cite{Fef} -- was studied in~\cite{NurSpar}. We also refer the reader to~\cite{hamerl2} for a generalisation of this construction to higher dimensions and its relation to the classical Patterson--Walker metrics~\cite{patwalk}.  

\end{rmk}

\subsection{Lift of projective vector fields}

Denoting by $\rho : M \to \mf$ the basepoint projection, an immediate consequence of Theorem~\ref{maincorrespondence} is:
\begin{col}\label{vectisom}
For every open set $U\subset \mf$ the Lie algebra of projective vector fields $\mathcal{P}_{[\nabla]}(U)$ is isomorphic to the Lie algebra of bi-La\-grang\-ian vector fields $\mathcal{B}_{(\Omega,I)}(\rho^{-1}(U))$. 
\end{col}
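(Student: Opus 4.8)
The plan is to realize both Lie algebras as algebras of infinitesimal automorphisms living on the common bundle $\projb\cong\bilb$, and to match them through the identity of Theorem~\ref{maincorrespondence}. First I would invoke the standard correspondence for Cartan geometries: a projective vector field on $U\subset\mf$ is the projection $\pi_*\hat{X}$ of a vector field $\hat{X}$ on $\pi^{-1}(U)\subset\projb$ which is $G$-invariant and satisfies $\mathcal{L}_{\hat{X}}\theta=0$, and conversely every such $\hat{X}$ projects to a projective vector field. Existence of the lift follows from the Remark on the uniqueness of Cartan's bundle: the local flow of a projective vector field acts by automorphisms of $(\mf,[\nabla])$, each of which lifts canonically to a $\theta$-preserving bundle automorphism, and differentiating yields $\hat{X}$. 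That $\hat{X}\mapsto\pi_*\hat{X}$ is a Lie algebra isomorphism onto $\mathcal{P}_{[\nabla]}(U)$ is likewise standard, since the bracket of two $G$-invariant $\theta$-preserving fields is again of this type and projects correctly.

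The essential observation is the following. If $\mathcal{L}_{\hat{X}}\theta=0$, then for each constant $v\in\mathfrak{sl}(3,\R)$ the $\theta$-constant field $\theta^{-1}(v)$ defined via property (ii) satisfies
$$
0=(\mathcal{L}_{\hat{X}}\theta)\bigl(\theta^{-1}(v)\bigr)=-\,\theta\bigl([\hat{X},\theta^{-1}(v)]\bigr),
$$
whence $[\hat{X},\theta^{-1}(v)]=0$ because $\theta$ is a coframing. Taking $v$ in the Lie algebra of $G$ shows that $\hat{X}$ commutes with all fundamental vector fields (property (i)), so $\hat{X}$ is automatically invariant under the identity component of the right $G$-action. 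In particular, preservation of $\theta$ together with mere $\mathrm{GL}(2,\R)$-invariance already forces invariance under the full structure group infinitesimally; this dissolves the apparent tension between forming $M$ as the quotient $\projb/\mathrm{GL}(2,\R)$ while lifting symmetries with respect to all of $G$.

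Next I would set up the bi-Lagrangian side. A bi-La\-grang\-ian vector field $Y$ on $\rho^{-1}(U)$ lifts uniquely to a $\mathrm{GL}(2,\R)$-invariant field $\hat{Y}$ on the adapted coframe bundle $\lambda^{-1}(\rho^{-1}(U))\subset\bilb$ that preserves the tautological form $\zeta=(\eta,\omega)$; since $\nu$ is the Libermann connection, which is canonically attached to $(\Omega,I)$, the lift $\hat{Y}$ preserves $\nu$ as well. Identifying $\bilb\cong\projb$ via $\psi$ and invoking the formula for $\theta$ in Theorem~\ref{maincorrespondence}, preservation of $(\eta,\omega,\nu)$ is equivalent to preservation of $\theta$. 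Thus $\hat{Y}$ is exactly an infinitesimal automorphism of the Cartan geometry, and by the coframing argument above it is $G$-invariant, so it projects to a projective vector field. Conversely, the lift $\hat{X}$ of a projective vector field preserves $\theta$, hence its components $\eta,\omega$, and is $\mathrm{GL}(2,\R)$-invariant, so it descends to a vector field on $M$ preserving $\Omega$ (whose pullback to $\bilb$ is $-\eta\wedge\omega$) and $I$ (encoded by the $\iota$-adapted splitting of $\zeta$), that is, a bi-La\-grang\-ian vector field.

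Finally I would observe that both correspondences are effected by the single lift to the common bundle $\projb\cong\bilb$, so the resulting maps $\mathcal{P}_{[\nabla]}(U)\to\mathcal{B}_{(\Omega,I)}(\rho^{-1}(U))$ and back are mutually inverse, and each is a Lie algebra homomorphism. The main obstacle I anticipate is the careful bookkeeping in the last two steps, namely verifying that ``symmetry of $(\Omega,I)$'' is genuinely equivalent to ``$\mathrm{GL}(2,\R)$-invariant lift preserving $\zeta$ and $\nu$,'' and that the automatic $G$-invariance furnished by the coframing identity makes the two notions of lift coincide on $\projb\cong\bilb$; this is a matter of matching canonical objects rather than any substantial computation.
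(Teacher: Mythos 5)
Your proposal is correct and follows essentially the same route as the paper's proof: the standard Cartan-geometry correspondence between projective vector fields on $U$ and $\theta$-preserving fields on $\pi^{-1}(U)$, the identification from Theorem~\ref{maincorrespondence} translating preservation of $\theta$ into preservation of $(\eta,\omega)$ and the Libermann connection, and the standard $G$-structure correspondence down to $(\Omega,I)$-preserving fields on $\rho^{-1}(U)$. The only difference is that you make explicit (via the coframing identity $[\hat{X},\theta^{-1}(v)]=0$) the automatic $G$-invariance that the paper subsumes under ``standard results,'' which is a correct and worthwhile elaboration.
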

\begin{proof}
By standard results about Cartan geometries (c.f.~\cite{parabook}), the projective vector fields on $U\subset (N,[\nabla])$ are in one-to-one correspondence with the vector fields on $\pi^{-1}(U)\subset \projb$ whose flow preserves the Cartan connection $\theta$ and which are equivariant for the principal right action. Theorem~\ref{maincorrespondence} implies that such a vector field corresponds to a vector field on $\psi(\pi^{-1}(U))\subset \bilb$ preserving both the tautological form $(\eta,\omega)$ and the Libermann connection. Again, by standard results about $G$-structures~\cite{parabook}, such vector fields are in one-to-one correspondence with vector fields on $\rho^{-1}(U)$ preserving both $\Omega$ and $I$.   
\end{proof}

Corollary~\ref{vectisom} can be strengthened in the sense that we show that every Killing vector field for $(M,g)$ is also symplectic with respect to $\Omega$ and hence the lift of a projective vector field on $(N,[\nabla])$. As a warm up, we first consider a correspondence between affine vector fields and Killing vector fields for the asscoiated Patterson--Walker metric 
(\ref{walker}). Let $\nabla$ be an affine connection on $\mf$. Recall that a vector field $K$ 
on $\mf$ is affine with respect to $\nabla$
if and only if 
\be
\label{formula_for_lie}
0=({\mathcal L}_K \nabla)_{ij}^k\equiv \frac{\p^2 K^k}{\partial x^i \partial x^j}
+K^m\frac{\p}{\partial x^m} \Gamma_{ij}^k- \Gamma_{ij}^m\frac{\partial K^k}{\partial x^m}
+\Gamma_{im}^k\frac{\partial K^m}{\partial x^j}+\Gamma_{jm}^k\frac{\partial K^m}{\partial x^i},
\ee
where we write $K=K^i\frac{\partial}{\partial x^i}$ in local coordinates $(x^i)$ on $U\subset N$ and where $\Gamma^i_{jk}$ denote the Christoffel symbols of $\nabla$ with respect to $(x^i)$. 
Any vector field on $\mf$ corresponds to a linear function on $T^*\mf$,
which in canonical local coordinates $(x^i,\xi_i)$ is given by $K^i\xi_i$. This function, together
with the canonical symplectic structure on $T^*\mf$ gives rise to the 
Hamiltonian vector field
\be
\label{complete_lift}
\widetilde{K}=K^i\frac{\p}{\partial x^i}-\xi_j\frac{\partial K^j}{\partial x^i}\frac{\p}{\partial \xi_i}.
\ee
This vector field is sometimes referred to as the complete lift \cite{yano}.
\begin{prop}
\label{prop_kill_1}
Let $K$ be an affine vector field for a connection $\nabla$ on $U\subset N$.
 Then
its complete lift  (\ref{complete_lift}) is a Killing vector field
for the Patterson-Walker metric (\ref{walker}).
\end{prop}
\noindent 
\begin{proof}
Consider the one--parameter group of transformations generated by the vector 
field (\ref{complete_lift})
\[
x^i\longrightarrow x^i+\epsilon\; K^i+O(\epsilon^2), \quad \xi_i\longrightarrow
\xi_i-\epsilon\; \xi_j\frac{\partial K^j}{\partial x^i}+O(\epsilon^2).
\]
This yields
\begin{eqnarray*}
g&&\longrightarrow g+\epsilon\{\xi_j K^idx^j d\xi_i-\xi_i K^j dx^id\xi_j
-(\xi_j\xi_i\xi_k K^j)dx^idx^k\\
&&-2\Gamma_{ik}^j\xi_j(\xi_m K^i)dx^kdx^m
+\Gamma_{ik}^j\xi_m\xi_j K^m dx^idx^k-K^m(\xi_m \Gamma_{ik}^j)\xi_j dx^idx^k\}
+O(\epsilon^2)
\\
&&=g-\epsilon\; \xi_k{\mathcal L}_{{K}} (\Gamma_{ij}^k) dx^i\odot dx^j+O(\epsilon^2).
\end{eqnarray*}
Therefore
\be
\label{formula}
{\mathcal L}_{\widetilde{K}} g=-
\xi_k{\mathcal L}_{{K}} (\Gamma_{ij}^k) dx^i\odot dx^j,
\ee
and the result follows.
\end{proof}
Recall that a vector field $K$ is projective for $\nabla$ if and only if $(\mathcal{L}_K\nabla)_0=0$, that is, there exists a $1$-form $\rho$ on $N$ such that
\be
\label{projective_field}
({\mathcal L}_K \nabla)^k_{ij}={\delta_i}^k \rho_j+ {\delta_j}^k \rho_i.
\ee
\begin{prop}
Let $K$ be a projective vector field with $\rho_i=\nabla_i f$. Then
\be
\label{vect_theo}
K-\xi_j\frac{\partial K^j}{\partial x^i}\frac{\p}{\partial \xi_i}+f\xi_i\frac{\p}{\partial \xi_i}
\ee
is a conformal Killing vector field for the Patterson-Walker metric
(\ref{walker}).
\end{prop}
\noindent
\begin{proof}
The proof is similar to that of Proposition (\ref{prop_kill_1}). The one-parameter
group of transformation generated by (\ref{vect_theo}) is
\[
x^i\longrightarrow x^i+\epsilon\; K^i+O(\epsilon^2), \quad \xi_i\longrightarrow
\xi_i-\epsilon\; \xi_j\frac{\partial K^j}{\partial x^i} -\epsilon f\xi_i +O(\epsilon^2),
\]
which gives
\[
g\longrightarrow 
g-\epsilon\; \xi_k\left({\mathcal L}_{{K}}\nabla\right)_{ij}^k dx^i\odot dx^j
-\xi_kdx^k\odot df+\epsilon f\;g
+O(\epsilon^2).
\]
This does not change the conformal class iff $K$ satisfies
(\ref{projective_field}) with $\rho=\d f$.
\end{proof}
Finally we give the main result of this Section, and establish a one--to--one correspondence between projective vector fields on $(\mf, [\nabla])$, and
Killing vector fields on the Einstein lift on $M$.
\begin{theo}
\label{theo_kill}
Let $K$ be a projective vector field on $(U, [\nabla])$, where $U\subset N$. 
Then
\be
\label{symp_kil}
{\mathcal K}:=K-\xi_j\frac{\partial K^j}{\partial x^i}\frac{\p}{\partial \xi_i}+
\rho_i\frac{\p}{\partial \xi_i}
\ee
is a Killing vector field for $g_{\nabla}$ which is symplectic with respect to the  symplectic form $\Omega_{\nabla}$.
Conversely, any Killing vector field for $g_{\nabla}$ is a lift 
(\ref{symp_kil}) from $\mf$ of some projective vector field.
\end{theo}
\noindent
\begin{proof} The integrability conditions for (\ref{projective_field})
are \cite{yano2} (note however that that our sign conventions for the Schouten tensor
differ from that in \cite{yano2}, so the sign of the RHS of (\ref{rho_integrability})
is opposite to what is given in \cite{yano2})
\be
\label{rho_integrability}
{\mathcal L}_K\Rho_{ij}=-\nabla_i\rho_j.
\ee
We shall also write ${\mathcal K}=\tilde{K}+ K_\rho$, where
$\tilde{K}$ is the complete lift (\ref{complete_lift}) 
and $K_\rho:=\rho_i\partial/\partial \xi_i$. Using~\eqref{eq:metricchoicecon} we compute
\begin{eqnarray*}
{\mathcal L}_{\mathcal K} g_{\nabla}&=& {\mathcal L}_{\widetilde{K}} h_{\nabla}+
 {\mathcal L}_{\widetilde{K}} \lambda\odot\lambda
+ {\mathcal L}_{K} \mathrm{Ric}^+(\nabla)+
 {\mathcal L}_{K_\rho} h_{\nabla}+
{\mathcal L}_{K_\rho} (\lambda\odot\lambda)\\
&=& -\xi_k\left({\mathcal L}_{{K}}\nabla\right)^k_{ij}  dx^i\odot dx^j
+0-(\nabla_i\rho_j) dx^i\odot dx^j
+ dx^i\odot d\rho_i\\
&&
- \Gamma_{ij}^k\rho_k dx^i \odot dx^j +
(\rho_idx^i)\odot (\xi_j dx^j)=0,
\end{eqnarray*}
where we have used (\ref{formula}), (\ref{projective_field})
and (\ref{rho_integrability}).

Now verify the symplectic condition
\begin{eqnarray*}
{\mathcal L}_{\mathcal K} \Omega_{\nabla}&=&
{\mathcal L}_{\widetilde{K}} (d\xi_i\wedge dx^i)
+{\mathcal L}_{K_\rho} (d\xi_i\wedge dx^i)
+{\mathcal L}_{K} (\Rho_{ij}dx^i\wedge dx^j)\\
&=&(d\rho_i\wedge dx^i- d\rho_i\wedge dx^i)=0
\end{eqnarray*}
as the complete lift $\widetilde{K}$ is Hamiltonian with respect to
$d\xi_i\wedge dx^i$, and we have used the skew part of the integrability conditions (\ref{rho_integrability}).

To prove the converse, consider a general vector field 
${\mathcal K}=K^i\partial/\partial x^i
+Q_i\partial/\partial \xi_i$ on $M$, and impose the Killing equations.
The $d\xi_i\odot d\xi_j$ components of these equations imply that $K^j=K^j(x^1, x^2)$.
The $d\xi_i\odot d x^j$ components yield the general form  (\ref{symp_kil}),
where $\rho_i$ are some unspecified functions on $N$. Finally 
the $dx^i\odot dx^j$ components imply that the vector field 
$K^i\p/\p x^i$ on $N$ is projective.
\end{proof}

\section{Local characterization of the metric}

In the previous section we have shown that the metric $g$ constructed on the canonical affine bundle of a projective surface $(N,[\nabla])$ is isometric to the metric 
\begin{equation}\label{eq:dmmetricchoiceaffine}
g_{\nabla}=h_{\nabla}+\lambda^2+\nu^*\mathrm{Ric}^+(\nabla)
\end{equation}
on the cotangent bundle $\nu : T^*N \to N$ of $N$, where $\nabla \in [\nabla]$ is any representative connection. The metric~\eqref{eq:dmmetricchoiceaffine} has previously appeard in~\cite{spanish} as a member of a one-parameter family $g_{\nabla,\Lambda}$ of split-signature metrics on $T^*N$ that one can associate to a torsion-free connection on $N$. The metrics take the form
\begin{equation}\label{eq:oneparamfamily}
g_{\nabla,\Lambda}=h_{\nabla}+\Lambda\,\lambda^2+\left(\frac{1}{\Lambda}\right)\nu^*\mathrm{Ric}^+(\nabla), 
\end{equation}
where $\Lambda$ is any non-zero real number. In particular, in~\cite{spanish} it is noted that the metrics $g_{\nabla,\Lambda}$ are anti-self-dual\footnote{self-dual with respect to the orientation convention of~\cite{spanish}.} and Einstein with scalar curvature $24\Lambda$, as can easily be verified by direct computation. Moreover, under the assumption that $\nabla$ is non-flat, the metrics $g_{\nabla,\Lambda}$ are locally characterized as the neutral signature four-dimensional type II Osserman metrics whose Jacobi operator have non-zero eigenvalues. We refer the reader to~\cite[Thm.~7.3]{spanish}
 for details. Here we provide another characterisation. Recall \cite{Gilkey,Chudecki,Walker_1} that
a distribution ${\mathcal D}\subset TM$ on a Riemannian manifold $(M,g)$ is called parallel if
${{}^g\nabla}_X Y\in\Gamma ({\mathcal D})$ if  
$Y\in\Gamma ({\mathcal D})$, where ${{}^g\nabla}$ is the Levi--Civita connection 
of $g$. 
Thus, if ${\mathcal D}$ is parallel, then
it is necessarily Frobenius integrable
as $[X, Y]={{}^g\nabla}_X Y - {{}^g\nabla}_Y X  \in\Gamma ({\mathcal D})$
if  $X, Y\in\Gamma ({\mathcal D})$.
\begin{theo}
\label{theo_asd}
Let $(M,g)$ be an ASD Einstein manifold with scalar curvature $24$ admitting a parallel 
ASD totally null distribution. Then $(M,g)$ is conformally flat, or it is locally isometric to $(T^*N,g_{\nabla})$ for some torsion-free connection $\nabla$ on $N$.
\end{theo}
\noindent
\begin{proof}
We shall rely on  two isomorphisms: $TM=\spp\otimes \spp'$, and 
${\Lambda^2}_-=\spp\odot \spp$. Let the ASD totally null distribution
correspond to an ASD two-form $\Theta\in\Gamma({\Lambda^2}_-)$, or equivalently to a section $\iota\in\Gamma(\spp)$. The Frobenius integrability conditions imply the local existence of two functions
$\xi_1$ and $\xi_2$ on $M$ such that $\mbox{Ker}(\Theta)=\mbox{span}\{\partial/\partial \xi_1, \partial/\partial \xi_2\}$. We can rescale $\iota$ so that the corresponding two--form is 
closed, and proportional to $dx^1\wedge dx^2$ for some functions $(x^1, x^2)$
which are constant on each $\beta$--surface in the two parameter family.
The functions $(\xi_1, \xi_2)$ are then the coordinates on the $\beta$--surface.
The corresponding metric takes the form
\[
g=d\xi_i\odot dx^i+\Theta_{ij}(x,\xi)dx^i\odot dx^j
\]
for some symmetric two-by-two matrix $\Theta$. The anti--self--duality 
condition on the Weyl tensor forces the components of $\Theta$ to be at most 
cubic in $(\xi_1, \xi_2)$, with some additional algebraic relations between the components. Imposing the Einstein  
condition  gives 
$$
\Theta_{ij}=\xi_i\xi_j+\Rho_{ji}-\Gamma_{ij}^k\xi_k,
$$ 
where the functions $\Gamma_{ij}^k$ do not depend on the coordinates $\xi_1,\xi_2$ and are otherwise arbitrary. Finally, the functions $\Rho_{ij}$ are determined by (\ref{formula_for_rho}). Comparing with the coordinate expression~\eqref{coordexpmetsymp} proves the claim.
\end{proof}

\begin{rmk}
If $\nabla$ is a torsion-free connection on $N$ with skew-symmetric Ricci tensor, then~\eqref{eq:oneparamfamily} simplifies to become
$$
g_{\nabla,\Lambda}=h_{\nabla}+\Lambda\lambda^2. 
$$
In particular, the limit $\Lambda\to 0$ is well-defined and hence the metric $g_{\nabla}$ can be deformed to a Ricci-flat anti-self-dual metric $g_{\nabla,0}=h_{\nabla}$ which appeared in~\cite{derdzinski}. 
\end{rmk}
\begin{rmk}
Note that if we correspondingly define a~\textit{charged symplectic form}\footnote{This terminology is motivated by
the Hamiltonian description of a charged particle moving on a manifold, where the canonical symplectic structure on the cotangent bundle needs to be modified by a pull-back of a closed two-form (magnetic field) from the base manifold. In our case the two-form is the skew-symmetric part
of the Schouten tensor, and the inverse of the Ricci scalar plays 
a role of electric charge. This magnetic term can always be set to zero by an appropriate choice of a connection in a projective class - here we find it convenient not to make any choices at this stage.}
$$
\Omega_{\nabla,\Lambda}=d\lambda+\left(\frac{1}{3\Lambda}\right)\nu^*\mathrm{Ric}^{-}(\nabla),
$$
then the pair $(g_{\nabla,\Lambda},\Omega_{\nabla,\Lambda})$ defines a bi-Lagrangian structure on $T^*N$ for every $\Lambda\neq 0$. The symplectic form $\Omega_{\nabla,\Lambda}$ is ASD with respect to our choice of orientation and the metric \eqref{eq:oneparamfamily}. Moreover, denoting by ${{}^g\nabla}$ the Levi-Civita connection of the metric $g_{\nabla,\Lambda}$, we obtain
$$
{{}^g\nabla}\,\Omega_{\nabla,\Lambda}=4L
$$
where $L$ is the pull--back to $M$ of the Liouville curvature
$\epsilon^{ij}\nabla_{i}\Rho_{jk}dx^k\otimes (dx^1\wedge dx^2)$ of $[\nabla]$, 
which vanishes if and only if $\nabla$ is projectively flat.
\end{rmk}

\begin{rmk}
Straightforward calculations show that Theorem~\ref{theo_kill} carries over to the case $(g_{\nabla,\Lambda},\Omega_{\nabla,\Lambda})$ with respect to the lift
$$
{\mathcal K}:=K-\xi_j\frac{\partial K^j}{\partial x^i}\frac{\p}{\partial \xi_i}+\frac{1}{\Lambda}
\rho_i\frac{\p}{\partial \xi_i}.
$$
\end{rmk}

\begin{rmk}
The existence of a neutral metric $g$ with a two--plane distribution 
imposes
topological restrictions on $M$. If $M$ is compact then \cite{atiyah, HH}
\[
\chi[M] \equiv 0 \ \mbox{mod} \ 2, \ \ \ \ \ \chi[M] \equiv \tau[M] \
\mbox{mod} \ 4,
\]
where $\tau[M]$ and $\chi[M]$ are the signature and Euler characteristic
respectively. C.~LeBrun pointed out to the authors the following argument which shows that a stronger statement is true in the case where the two-plane distribution $\mathcal{D}$ is totally null with respect to $g$.\footnote{Private communication, March 2016.} We may assume that $\mathcal{D}$ is the graph of an isomorphism $\eee \to \eee^{\prime}$, where $TM=\eee\oplus \eee^{\prime}$ is an orthogonal decomposition into time-like and space-like sub-bundles with respect to some chosen background metric $h$ on $M$. After possibly passing to a double cover we can assume $\eee$ and $\eee^{\prime}$ to be orientable. Moreover, we may fix orientations so that the isomorphism $\eee \to \eee^{\prime}$ is orientation reversing, thus equipping $M$ with an orientation so that $\mathcal{D}$ is anti-self-dual. By rotating clockwise in $\eee$ and $\eee^{\prime}$ with respect to $h$, we obtain an almost complex structure on $M$ such that $\eee$ becomes a complex line sub-bundle $L$, and so that $\eee^{\prime}$ becomes its dual bundle $L^*$. Consequently, $M$ admits an almost complex structure $J$ such that the canonical bundle of $(M,J)$ is trivial. After possibly passing to a double cover it therefore follows that $M$ is oriented and spin and -- assuming $M$ is compact -- that
\begin{equation}\label{eq:topcond}
2\chi[M]+3\tau[M]=0.
\end{equation}
Note that fixing the orientation so that $\mathcal{D}$ is self-dual leads to a sign change in~\eqref{eq:topcond} as $\tau$ changes sign when reversing the orientation whereas $\chi$ does not. Also, note that the existence of $\mathcal{D}$ forces $M$ to be orientable hence~\eqref{eq:topcond} still holds true (assuming our choice of orientation) without passing to the cover as $\chi$ and $\tau$ are both doubled when passing to a double cover. 
\end{rmk}

\section{Gauge theory of Tractor Connection}
In this Section we shall present a gauge--theoretic construction of the 
metric (\ref{main_metric}). We shall introduce a projectively invariant
equation on a connection, and a pair of Higgs fields on an auxilary vector
bundle $E\rightarrow N$.  In the special case when $E$ is a rank--3 cotractor
bundle (see \S\ref{main_tractor_section}) and the gauge group is $SL(3, \R)$,
the horizontal lifts of the geodesic spray of $\nabla$ and the Higgs field
will give rise to an integrable  $\alpha$--plane (twistor) distribution on 
$TM$, where $M=\PP(E)$ with a projective line removed from each fiber.

Let $(\mf, [\nabla])$ be a projective structure on a surface, and let
$E\rightarrow \mf$ be a vector bundle  with
$\mathfrak{g}$--connection $A$, where $\mathfrak{g}$ is some Lie algebra. 
Let $\phi$ be a one-form on $N$, called the Higgs pair, 
with values  in the Lie algebra $\mathfrak{g}$. 
In an open set $U\subset \mf$ we shall write 
$\phi=\phi_i dx^i$ and regard $\phi$ and $A$ as $\mathfrak{g}$ valued
one-forms on $N$ transforming as
\begin{eqnarray*}
&A&\longrightarrow \gamma A\gamma^{-1}-d\gamma\;\gamma^{-1}\\
&\phi&\longrightarrow\gamma\phi\gamma^{-1}
\end{eqnarray*}
under the gauge transformations. Here $\gamma:N\rightarrow G$, and
$G$ is the gauge group with the Lie algebra $\mathfrak{g}$.

For any  chosen connection
$\nabla\in [\nabla]$ in 
the projective class consider the system of equations
\be
\label{cal_2}
D_{(i}\phi_{j)}=0,
\ee 
where 
\[
D_i\phi_j:=\partial_i\phi_j-\Gamma_{ij}^k\phi_k-[A_i, \phi_j].
\]
In \cite{Cal1,Cal2} these equations appear in a slightly different setup,
when $A$ is a connection on a principal (rather than a vector) bundle.
While our construction below is self--contained, and 
does not rely on the results of 
\cite{Cal1,Cal2}, we shall nevertheless refer to  (\ref{cal_2})
as the Calderbank equations.
\subsection{The Calderbank equations.}
An equivalent way to formulate (\ref{cal_2})
 is to say that 
the Higgs pair 
is constant along the charged geodesic
spray on $T\mf$, i.e.
\be
\label{cal_1}
{\bf \Theta}^A(\phi):=\Big(\pi^i\frac{\partial}{\partial x^i}-\Gamma_{ij}^k\pi^i\pi^j\frac{\partial}{\partial \pi^k}\Big) (\phi)-[A, \phi]=0,
\ee
where  $\pi^i$ are coordinates on the fibres of $T\mf$, and 
$\phi=\phi_i\pi^i$ and  $A=A_i\pi^i$ are $\mathfrak{g}$--valued linear
functions on $T\mf$.
The equations (\ref{cal_2}) do not depend on the choice of the connection 
$\nabla$ in 
the  projective class if the Higgs field $\phi$ has projective weight $2$.

In \S\ref{main_tractor_section}  we shall show how the Calderbank equations with the gauge group
$\mathrm{SL}(3, \R)$ -- regarded as a subgroup of the group of diffeomorphisms of
$\RP^2$ -- leads to the neutral signature anti--self--dual Einstein metric (\ref{coordexpmetsymp}).
We shall first list some other (implicit) occurrences
of these equations for other gauge groups.
\subsubsection{Null reductions of anti-self-dual Yang--Mills equations.} 
If the projective structure is flat, then (\ref{cal_2}) is 
the symmetry reduction of the anti-self-dual Yang--Mills
(ASDYM) equation on $\R^{2, 2}$ by two null translations and such 
that the $(2, 2)$ 
metric $g$ restricted to the two--dimensional space of orbits $\mf=\R^2$
is totally isotropic, and the bi-vector generated by the null translations is anti-self-dual.

To see it, consider a $\mathfrak{g}$--valued connection one--form $A$ on 
$\R^{2, 2}$, and set
$F=dA+A\wedge A$.
In local coordinates adapted to $\R^{2, 2}=T\mf$ with $x^i$ the coordinates 
on $\mf$,
the null isometries are $\partial/\partial \xi_i$, and
the metric is
\[
g=dx^1d\xi_1+dx^2d\xi_2.
\]
Choose an orientation on $\R^{2, 2}$ such that the two--form $dx^1\wedge dx^2$ is ASD.
Defining two Higgs fields $\phi_1=\partial/\partial \xi_2\hook A, \phi_2=\partial/\partial \xi_1\hook A$, 
the ASDYM equations $F=-*F$ yield \cite{MW}
\be
\label{null_ASD}
D_1 \phi_1=0, \quad D_2 \phi_2=0, \quad D_1 \phi_2+D_2 \phi_1=0,
\ee
where $D=d+A_1dx^1+A_2dx^2$ is a covariant derivative on $\mf$ induced by $A$.
In \cite{tafel} these equations have been solved completely for the gauge 
group $\mathrm{SL}(2)$.
\subsubsection{Prolongation of the Calderbank equations}
Instead of regarding both the connection and the Higgs pair as unknowns, we
shall assume that the connection is given and consider  (\ref{null_ASD})
as a system of PDEs for the Higgs pair.
To determine all derivatives of the Higgs pair in (\ref{null_ASD})
we prolong the system once, and define $\mu$ by the equation
\[
D_i\phi_j=\frac{1}{2}\mu\epsilon_{ij},
\]
where $\epsilon=dx^1\wedge dx^2$ is the parallel volume form of $\nabla\in [\nabla]$.
Commuting the covariant derivatives now leads to a closed 
system and therefore a connection
\[
D_i\left(\begin{array}{c}
\phi_j\\ 
\mu
\end{array} \right)=\left(\begin{array}{c}
\frac{1}{2}\mu\epsilon_{ij}\\ 
2[\phi_i, {\mathcal F}]
\end{array} \right),
\]
where ${\mathcal F}=[D_1, D_2]$ is the $\mathfrak{g}$--valued curvature of the connection 
$A$. The system is now closed. Commuting the covariant derivatives on $\mu$ leads to an integrability condition
\[
[{\mathcal F}, \mu]-2D_1[\phi_2, {\mathcal F}]+2D_2[\phi_1, {\mathcal F}]=0.
\]
\subsubsection{Killing equations.} 
If the connection $A$ is flat, and $\mathfrak{g}=\R$ then
the Calderbank equations become the projectively invariant
Killing equations.
\subsubsection{Anti-self-dual conformal structures with null conformal Killing vectors.}
Let $\mathfrak{g}$ be a subalgebra of the infinite dimensional Lie algebra of vector fields $\mathfrak{diff}(\Sigma)$ on a surface $\Sigma$
consisting of those elements of $\mathfrak{diff}(\Sigma)$ which commute with a 
fixed vector field $K$ on $\Sigma$. Let $M\rightarrow\mf$ be a surface bundle over $\mf$, with two dimensional fibres $\Sigma$.
In this case the Calderbank equations
are solvable by quadrature and the two-dimensional distribution 
\be
\label{twistor_dist}
{\mathcal D}=\{
{\bf \Theta}^A:=\pi^i\frac{\p}{\partial x^i}-\Gamma_{ij}^k\pi^i\pi^j\frac{\p}{\partial \pi^k}
-A_i(x)\pi^i, \; \phi=\pi^i\phi_i\}
\ee
spanning an $\RP^1$ worth of null self--dual surfaces ($\alpha$--surfaces) through
each point of $M$ is the twistor distribution
for the most general ASD $(2, 2)$ conformal 
structure
which admits a null conformal Killing vector $K$ \cite{DW, Cal1, nakata}.
\subsubsection{The Patterson-Walker Riemannian extension}
The conformal structure resulting from the distribution
(\ref{twistor_dist}) is a generalisation of the Patterson-Walker
lift \cite{walker,Gilkey}.  To recover the Patterson-Walker metric 
\be
g=d\xi_i\odot dx^i-\Gamma_{ij}^k\;\xi_k\; dx^i\odot dx^j,
\ee
take
the gauge algebra $\mathfrak{g}=\mathfrak{gl}(2,\R)$ which generates linear
transformations of $\Sigma=\R^2$. If the coordinates on $\Sigma$ 
are $(\xi_1, \xi_2)$,
the elements of $\mathfrak{gl}(2,\R)$ are vector fields of the 
form 
$
{{\bf t}_i}^j= \xi_i\frac{\p}{\partial \xi_j}.
$
Taking the 
connection $A$ and the  Higgs field given 
by
\[
A_i=-\Gamma_{ij}^k\;\xi_k\frac{\p}{\partial \xi_j}, \quad \phi_i=(b^k\xi_k)\epsilon_{ij}\frac{\p}{\partial \xi_j}
\]
where $b^k$ is a non--zero constant
leads to an integrable distribution (\ref{twistor_dist}), as
then
\begin{eqnarray*}
[{\bf \Theta}^A, \phi] &=&\pi^ib^j
\Big( {\Gamma_{jk}}^k\xi_i+ {\Gamma_{ij}}^k\xi_k\Big)
\epsilon_{lm}\pi^l\frac{\p}{\partial \xi_m}\\
&=&0 \quad(\mbox{mod}\; \phi).
\end{eqnarray*}
The resulting
metric (\ref{walker}) on $M=T\mf$ is then uniquely determined by the condition that  the integral two--surfaces of ${\mathcal D}$ in $T\mf\times\RP^1$ project down to self-dual totally null surfaces 
on $T\mf$. 
The generators of the gauge group
satisfy
$
[{\bf v}, K]={\bf v},
$
where $K=\xi_1\partial/\partial \xi_1+\xi_2\partial/\partial \xi_2$ is a 
conformal null Killing vector
of (\ref{walker}).

We shall end this subsection by clarifying  the connection between
projective changes of $\nabla\subset[\nabla]$, and conformal 
rescalings of the metric $g$ on $T^*\mf$. 
We shall 
restrict our discussion to {\em special connections} in 
$[\nabla]$ which preserve some volume.
Consider the effect of transformation (\ref{connection_change}) with $\Upsilon_i=\nabla_i f$, together with rescaling the fibers of $T\mf\rightarrow \mf$
\[
\xi_i\rightarrow \hat{\xi}_i=e^{2f}\xi_i
\]
on the Patterson--Walker lift\footnote{In \cite{DT10} (see also 
\cite{BDE,hamerl,hamerl2} for other applications of this lift) 
it was proven that a `similar' metric
\be
\label{walker_dt}
g=d\xi_i\odot dx^i-\Pi_{ij}^k\;\xi_k\; dx^i\odot dx^j,
\ee
constructed out of the 
Thomas symbols $\Pi_{ij}^k=\Gamma_{ij}^k-\frac{1}{3}\Gamma_{il}^l\delta^k_j
-\frac{1}{3}\Gamma_{jl}^l\delta^k_i$
is anti--self--dual  and null--K\"ahler  (with ASD null--K\"ahler two--form) for any choice of $\Gamma_{ij}^k$.
The Patterson--Walker lift (\ref{walker}) is conformally equivalent 
(up to a diffeomorphism) to the projective
Patterson--Walker lift (\ref{walker_dt})
 only if
$\Gamma_{ij}^j=\nabla_i F$ for some function $F$ on $N$.
}  (\ref{walker}). A straightforward calculation yields
\[
\hat{g}=e^{2f} g.
\]
Thus conformal scales on $T\mf$ correspond to projective scales on $\mf$.
\subsection{Tractor connection and ASD Einstein metrics}
\label{main_tractor_section}
In this Section we shall consider the Calderbank equations, where
the gauge group is $\mathrm{SL}(3, \R)$, and $E$ is the 
standard cotractor bundle
for the projective structure $[\nabla]$. Recall the Cartan bundle
$\projb$ from section (\ref{cartgeomsec}).
We may think of the left action of $H\subset \mathrm{SL}(3,\R)$ on $\R_3$ by matrix multiplication as a (linear) $H$-representation and consequently, we obtain an associated rank-$3$ vector bundle $E$ for every projective surface $(\mf,[\nabla])$. The vector bundle $E$ is commonly referred to as the~\textit{cotractor bundle} of $(\mf,[\nabla])$. Interest in $E$ stems from the fact that it comes canonically equipped with an $\mathrm{SL}(3,\R)$ connection which is flat if and only if $(\mf,[\nabla])$ is, see~\cite{BEG}.


Let $\mathcal{E}(1)$ be the line bundle of projective densities of weight 1.
Consider a rank-three vector bundle $E={\mathcal E}(1)\oplus
(T^*\mf\otimes{\mathcal E}(1))$ over  $\mf$ with connection \cite{BEG}
\be
\label{tractor_con}
{\quad{\mathcal{D}}_i \left(\begin{array}{c}
\sigma\\ 
\mu_j
\end{array} \right)= 
\left(\begin{array}{c} \nabla_i\sigma-\mu_i \\ 
\nabla_i\mu_j+\Rho_{ij}\sigma
\end{array} \right),}
\ee
where $\Rho_{ij}$ is the (not necessarily symmetric) Schouten tensor of projective geometry. The splitting of the cotractor bundle depends on a choice of a connection $\nabla$ in the projective class $[\nabla]$, and 
under (\ref{connection_change})
changes according to 
\be
\label{tractor_change}
\left(\begin{array}{c}
\hat{\sigma}\\ 
\hat{\mu}_j
\end{array} \right)=
\left(\begin{array}{c}
\sigma\\ 
\mu_j+\Upsilon_j\sigma
\end{array} \right).
\ee
Using the tractor indices $\alpha, \beta, \dots =0, 1, 2$ we can rewrite the connection (\ref{tractor_con}) in terms of its Christoffel symbols
$\gamma_{i\alpha}^\beta$  as
\[
\gamma_{i0}^0=0, \quad \gamma_{i0}^j=\delta_i^j,\quad
\gamma_{ij}^k=\Gamma_{ij}^k, \quad \gamma_{ij}^0=-\Rho_{ij}.
\]
The vector fields
\[
{{\bf t}_\alpha}^\beta=\psi_\alpha\frac{\p}{\partial \psi_\beta}
\]
generate the linear action of $\mathrm{GL}(3, \R)$ on the fibres of $E$.
These generators descend to eight vector fields
(which we shall also denote ${{\bf t}_\alpha}^\beta$) which generate
the action of $\mathrm{SL}(3, \R)$ on the fibres of the projective cotractor
bundle $\PP(E)$ which is a quotient of $E$ by the Euler vector field
$\sum_{\alpha=0}^2 {{\bf t}_\alpha}^\alpha$. Setting $\xi_i=\psi_i/\psi_0$ yields
\[
{{\bf t}_i}^j=\xi_i\frac{\p}{\partial \xi_j}, \quad  
{{\bf t}_i}^0=-\xi_i\xi_j\frac{\p}{\partial \xi_j}, \quad
{{\bf t}_0}^i=\frac{\p}{\partial \xi_i}, \quad {{\bf t}_0}^0=-\xi_j\frac{\p}{\partial \xi_j}.
\]
Consider the Calderbank equations with the gauge group $\mathrm{SL}(3, \R)\subset\mbox{Diff}(\RP^2)$, where
the connection is given by a vector--valued one-form
\[
A=A_idx^i=-\gamma_{i\beta}^{\alpha} dx^i\otimes\;{{\bf t}_\alpha}^\beta
\]
so that
\[
A_i=(\Rho_{ij}+\xi_i\xi_j  -\Gamma_{ij}^k\;\xi_k)\frac{\p}{\partial \xi_j}.
\]
The Calderbank equations are solved by the Higgs pair
\[
\phi_i=\epsilon_{ij}\frac{\p}{\partial \xi_j}.
\] 
Let $M$ be a complement of a projective line in the total space of the bundle $\PP(E)$.
The corresponding contravariant metric on $M$ is constructed by
demanding that the leaves of the rank-2 distribution (\ref{twistor_dist})
${\mathcal D}\subset T(M\times\RP^1)$ project down to self-dual
two-surfaces on $M$. This gives
$
\epsilon^{ij}(\partial/\partial x^i -A_i)\odot \phi_j,
$
or, in the covariant form,
\be
\label{einstein_lift}
g=(d\xi_i-(\Gamma_{ij}^k \xi_k -\xi_i\xi_j-\Rho_{ji})dx^j)\odot dx^i,
\ee
so that we have recovered the metric of the bi-La\-grang\-ian structure
(\ref{coordexpmetsymp}).
\begin{theo}
\label{theo_char}
Formula (\ref{einstein_lift}) defines a metric which does not depend on a
choice of a connection in a projective class.
\label{theo_einstein}
\end{theo}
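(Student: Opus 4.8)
The plan is to verify invariance by substituting the projective transformation laws directly into \eqref{einstein_lift}. First I would record how the affine fibre coordinates transform. Since $\xi_i=\psi_i/\psi_0$ are affine coordinates on $\PP(E)$ and the tractor components transform under the projective change \eqref{connection_change} by \eqref{tractor_change} as $\hat\sigma=\sigma$, $\hat\mu_i=\mu_i+\Upsilon_i\sigma$, the ratios transform by $\hat\xi_i=\xi_i+\Upsilon_i$. The base coordinates $x^i$, and hence the one-forms $\d x^i$, are untouched. Writing $\alpha_i:=\d\xi_i-\left(\Gamma_{ij}^k\xi_k-\xi_i\xi_j-\Rho_{ji}\right)\d x^j$ so that $g=\alpha_i\odot \d x^i$, the claim reduces to showing that each $\alpha_i$ is projectively invariant, which is slightly stronger than invariance of $g$ itself.

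I would then expand the hatted bracket $\hat\Gamma_{ij}^k\hat\xi_k-\hat\xi_i\hat\xi_j-\hat\Rho_{ji}$ using \eqref{connection_change}, the Schouten transformation \eqref{schouten_change} (applied with the indices in the order $ji$, keeping the full, not-necessarily-symmetric, tensor), and $\hat\xi_i=\xi_i+\Upsilon_i$. The terms quadratic in $\Upsilon$ cancel because $\Upsilon_i\Upsilon_j$ occurs with coefficient $+2$ from $\hat\Gamma_{ij}^k\hat\xi_k$, $-1$ from $-\hat\xi_i\hat\xi_j$, and $-1$ from $-\hat\Rho_{ji}$; the mixed terms $\Upsilon_j\xi_i$ and $\Upsilon_i\xi_j$ from $\hat\Gamma_{ij}^k\hat\xi_k$ cancel against those from $-\hat\xi_i\hat\xi_j$. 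What survives is precisely $\Gamma_{ij}^k\Upsilon_k+\nabla_j\Upsilon_i$, and here the torsion-freeness of $\nabla$ is essential: since $\nabla_j\Upsilon_i=\partial_j\Upsilon_i-\Gamma_{ji}^k\Upsilon_k$ and $\Gamma_{ji}^k=\Gamma_{ij}^k$, this collapses to $\partial_j\Upsilon_i$. Hence
\[
\hat\Gamma_{ij}^k\hat\xi_k-\hat\xi_i\hat\xi_j-\hat\Rho_{ji}=\left(\Gamma_{ij}^k\xi_k-\xi_i\xi_j-\Rho_{ji}\right)+\partial_j\Upsilon_i.
\]
The extra term $\partial_j\Upsilon_i\,\d x^j$ is then exactly cancelled by the contribution $\partial_j\Upsilon_i\,\d x^j$ coming from $\d\hat\xi_i=\d\xi_i+\d\Upsilon_i$, so that $\hat\alpha_i=\alpha_i$ and therefore $\hat g=g$.

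The computation is routine index bookkeeping rather than a genuine obstacle; the only point requiring care is the non-symmetric Schouten tensor, where one must transform $\Rho_{ji}$ rather than its symmetric part, even though the outer symmetrisation by $\odot\,\d x^i$ would in any case discard the antisymmetric piece. I would also remark that this invariance is not accidental: by Theorem~\ref{maincorrespondence} the pair $(g,\Omega)$ descends from $G$-invariant data on the Cartan bundle $\projb$ of $(\mf,[\nabla])$, whose coordinate expression in a preferred connection is \eqref{coordexpmetsymp}; since \eqref{einstein_lift} agrees with the $g$ of \eqref{coordexpmetsymp} (the antisymmetric part of $\Rho_{ji}$ dropping out under $\odot$), the metric is manifestly independent of the representative $\nabla$. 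The direct calculation above simply confirms this in coordinates.
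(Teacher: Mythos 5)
Your proposal is correct and follows essentially the same route as the paper: translate the fibre coordinates by $\hat{\xi}_i=\xi_i+\Upsilon_i$ in accordance with \eqref{tractor_change}, substitute the transformation laws \eqref{connection_change} and \eqref{schouten_change}, and verify the cancellation directly. The only (minor) refinement is that you observe each one-form $\alpha_i$ is separately invariant rather than just the symmetrised product, and you correctly flag the role of torsion-freeness and of the unsymmetrised Schouten tensor.
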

\noindent
\begin{proof}
If we change the connection in the
projective class using (\ref{connection_change}) then the Schouten tensor changes by
(\ref{schouten_change}). To establish the invariance of (\ref{einstein_lift}) we translate the fibre coordinates according to
\[
\hat{\xi}_i=\xi_i+\Upsilon_i
\]
in agreement with (\ref{tractor_change}).
Then
\begin{eqnarray*}
&&(d\hat{\xi}_i-({\hat{\Gamma}_{ij}}^k \hat{\xi}_k-\hat{\xi}_i\hat{\xi}_j-\hat{\Rho}_{ji})dx^j)\odot dx^i
= d\xi_i\odot dx^i +\\
&&\Big(\xi_{(j} \Upsilon_{i)} -  \Gamma_{ij}^k\xi_k
-\xi_i\Upsilon_j-\xi_j\Upsilon_i- \Gamma_{ij}^k\Upsilon_k-2\Upsilon_i\Upsilon_j
+\xi_i\xi_j+\xi_i\Upsilon_j+\xi_j\Upsilon_i+\Upsilon_i\Upsilon_j
\\
&&+\Rho_{ji}-\nabla_{(j}\Upsilon_{i)}+\Upsilon_i\Upsilon_j\Big)dx^i\odot dx^j\\
&&=
\left(d \xi_i-\left(\Gamma_{ij}^k \xi_k
-\xi_i\xi_j-{\Rho}_{ji}\right)dx^j\right)\odot dx^j.
\end{eqnarray*}
\end{proof}
The metric is anti-self-dual, and Einstein
with scalar curvature equal to $24$. The anti-self-duality is a consequence of the fact
that the connection $A$ and the Higgs field $\phi_i\pi^i$ satisfy the 
Calderbank equations \cite{Cal1}. 

\section{Examples}
\subsection{Homogeneous model $M=\mathrm{SL}(3, \R)\mm\mathrm{GL}(2, \R)$.}
\label{sec_homo}
Consider the flat projective structure on $(\mf=\RP^2, [\nabla])$, and choose
${\Gamma_{ij}}^k=0$. The resulting four manifold is the complement of 
an $\RP^1$ sub-bundle in the projective cotractor bundle of $\RP^2$ which can be identified with with $M=\mathrm{SL}(3, \R)\mm\mathrm{GL}(2, \R)$. We shall establish this result in arbitrary dimension. Consider $\mf=\RP^n$, with
its flat projective structure, and an $\mathrm{SL}(n+1)$ action on the projective
cotractor bundle $\PP(E)$ minus the diagonal   
\[
\mathrm{SL}(n+1):\R^{n+1}\times \R_{n+1}\mm\Delta\longrightarrow \R^{n+1}\times \R_{n+1}\mm\Delta
\]
where the `diagonal' $\Delta$ consists of all incident pairs of vectors $[v]\in \R^{n+1}$ and forms $[f]\in \R_{n+1}$ s.t. the corresponding point $v\in \RP^n$
belongs to  the hyperplane $f\in \RP_n$. This action is simply 
$(v, f)\rightarrow(Av, fA^{-1})$. It  is transitive, and clearly a subgroup stabilising a pair (point,  hyperplane) is $\mathrm{GL}(n)$ which sits in $\mathrm{SL}(n+1)$ as a lower diagonal
block.

To finish the proof we need to argue that $\R^{n+1}\times \R_{n+1}\mm\Delta$ projects down to a complement of an $\RP_{n-1}$ sub-bundle in $\PP(E)$. This
sub-bundle is just $\PP(T^*\mf)$ and it has an injection into $\PP(E)$ 
given by $f\rightarrow (0, f)$. A point in $\mf$ with homogeneous coordinates $[1, 0, ..., 0]$
(corresponding to our  choice of an affine chart) is not incident with
any cotractor in $\PP(E)/\RP_{n-1}$, so removing a diagonal is equivalent to
looking at the complement of this sub-bundle.

The Einstein metric
(\ref{einstein_lift}) on this manifold admits a Kerr-Schild form
\be
\label{sl3gl2}
g=d\xi_i\odot dx^i +\Lambda (\xi_jdx^j)^2
\ee
with eight dimensional isometry group $\mathrm{SL}(3, \R)$ \cite{CDT13}
in agreement with Theorem \ref{theo_kill}).
This metric is a neutral signature analog of the Fubini--Study metric
on $\CP^2$. Both metrics arise as different real forms
of $\mathrm{SL}(3, \C)/\mathrm{GL}(2, \C)$. The limit $\Lambda=0$ 
in (\ref{sl3gl2}) gives the flat metric.
\subsection{Ricci--flat limits.} Motivated by the previous example
let us now consider the general case of projective structures which admit a connection with skew-symmetric
Schouten tensor. In this case one can always choose local coordinates
on $N$ and a connection $\nabla\in [\nabla]$ such that
\cite{Wong}
\[
\Gamma_{11}^1=-\frac{\partial f}{\partial x^1}, \quad 
\Gamma_{22}^2=\frac{\partial f}{\partial x^2},
\]
where $f:\mf\longrightarrow \R$ is an arbitrary function, and all other components of $\nabla$
vanish\footnote{An alternative characterisation of the 
corresponding projective structures is that they arise from second-order ODEs point equivalent to derivatives of first order ODEs \cite{DW}.
These projective structures where further characterised in 
\cite{randal} and \cite{krynski}.}. In this case
\[
\Rho=\frac{1}{3}\frac{\p^2 f}{\partial x^1\partial x^2} dx^1\wedge dx^2,
\]
and the metric is given by
\be
\label{skew_metrics}
g=d\xi_i\odot dx^i +\xi_1\frac{\partial f}{\partial x^1} (dx^1)^2
-\xi_2\frac{\partial f}{\partial x^2} (dx^2)^2+
\Lambda (\xi_jdx^j)^2.
\ee
Setting $\Lambda=0$ gives an ASD Ricci-flat 
metric which has a form of the Patterson--Walker lift (\ref{walker}) 
and has  appeared in the work of Derdzinski \cite{derdzinski}.
\subsection{Cohomogeneity--one examples}
The dimension of the Lie algebra $\mathfrak{g}$ of projective vector fields for a given  projective structure on a surface $N$ can be $8, 3, 2, 1$ or $0$
(see \cite{Lie}, and also \cite{romanovskii,bryant_mano,dumitriescu}). If 
the dimension is maximal and equal to 8 then $\mathfrak{g}=\mathfrak{sl}(3, \R)$, and
the projective structure is flat. We have shown that in this case the resulting
metric (\ref{main_metric}) is given by (\ref{sl3gl2}), and admits $8$ Killing 
vectors in agreement with Theorem \ref{theo_kill}. We shall now consider the 
submaximal case, where $\mathfrak{g}=\mathfrak{sl}(2, \R)$. There are two one--parameter families of non--flat projective structures with this symmetry. Their unparametrised geodesics are integral curves of a second order ODE
\[
y''=c(xy'-y)^3,
\]
where $c\neq 0$. We compare this to the general second order ODE defining a projective structure (see e.g. \cite{BDE})
\be
\label{projective_ode}
y''=\Gamma_{22}^1 (y')^3+(2\Gamma_{12}^1-\Gamma_{22}^2) (y')^2+(\Gamma_{11}^1-
2\Gamma_{12}^2)y'-\Gamma_{11}^2,
\ee
and chose the representative connection $\nabla$ by
\[
\Gamma_{11}^1= -\Gamma_{12}^2=-\Gamma_{21}^2 =cxy^2, \quad 
\Gamma_{22}^2=- \Gamma_{21}^1= -\Gamma_{12}^1= cx^2y, \quad \Gamma_{22}^1=cx^3, \quad \Gamma_{11}^2=cy^3.
\]
The corresponding ASD Einstein metric (\ref{main_metric}) is
\be
\label{sl_metric}
g=d\xi_i \odot dx^i +\Lambda(\xi_i dx^i)^2+\frac{4c}{\Lambda}(x^2dx^1-x^1dx^2)^2-\Gamma_{ij}^kdx^i\odot dx^j,
\ee
where $x^i=(x, y)$. This metric admits a three--dimensional isometry group
$SL(2, \R)$
generated by left--invariant vector fields $K_\alpha, \alpha=1, 2, 3$ given by
\[
K_1=x^1\frac{\p}{\p x^1}-x^2\frac{\p}{\p x^2}-
\xi_1\frac{\p}{\p \xi_1}+\xi_2\frac{\p}{\p \xi_2}, \quad
K_2=2x^1\frac{\p}{\p x^2}-2\xi_2\frac{\p}{\p \xi_1}, \quad
K_3= 2\xi_1\frac{\p}{\p \xi_2}- 2x^2\frac{\p}{\p x^1},
\]
and acting on $M=\R\times SL(2, \R)$ with three--dimensional orbits. We shall
use an invariant coordinate $r$ given by
 $r^2\equiv {(x^1\xi_1+x^2\xi_2)}$ 
which is constant on the orbits. Let $\sigma^{\alpha}$ be right--invariant one--forms
on $SL(2, \R)$ such that
\[
{\mathcal{L}}_{K_\alpha} \sigma^\beta=0, \quad \forall \alpha, \beta,\quad
\mbox{and}\quad
d\sigma^1+2\sigma^2\wedge\sigma^3=0, \quad
d\sigma^2+\sigma^2\wedge\sigma^1=0, \quad
d\sigma^3-\sigma^3\wedge\sigma^1=0.
\]
There is some freedom, measured by functions of $r$, in choosing these one--forms. 
If we chose $\Lambda<0$, and take
\[
\sigma^1= \frac{\xi_idx^i-x^id\xi_i}{r^2}+ \frac{2\Lambda rdr}{\Lambda r^2-1}, 
\quad \sigma^2=\frac{\Lambda r^2-1}{r^2}(x^1 dx^2-x^2dx^1),
\quad  \sigma^3=  
\frac{\xi_1d\xi_2-\xi_2 d\xi_1}{r^2(\Lambda r^2-1)}
\] 
then
the metric (\ref{sl_metric}) takes the form
\be
g=\label{homogeneous_metric}
\frac{dr^2}{1-\Lambda r^2}-\frac{1}{4}r^2(1-\Lambda r^2)(\sigma^1)^2-
\frac{c}{\Lambda}\frac{(\Lambda r^2-4)r^4}{(\Lambda r^2-1)^2}(\sigma^2)^2 +r^2\sigma^2\odot\sigma^3, \quad \Lambda<0.
\ee
Note that (\ref{homogeneous_metric}) is non--diagonal in the basis defined by the right--invariant one--form on $SL(2, \R)$. This is only possible in neutral
signature: All cohomogeneity one Einstein metrics in Riemannian signature
can be diagonalised \cite{DT17}.

The  metric (\ref{homogeneous_metric}) appears to be singular when $r=0$, but calculating the invariant 
norm of the Weyl curvature we find $|C|^2=96\Lambda^2$, which is regular. In fact near $r=0$ the metric
(\ref{homogeneous_metric}) approaches the space of constant curvature which is a 
neutral signature analogue of the hyperbolic space. To exhibit this space
in a standard form we neglect the small terms involving $r^4$, 
and set $r=2R/(1+\Lambda R^2)$. Then, near $R=0$,  the metric 
(\ref{homogeneous_metric}) becomes
\[
g\sim \frac{4}{(1+\Lambda R^2)^2}\Big( dR^2 -\frac{R^2}{4}\Big( (\sigma^1)^2-4\sigma^2\odot\sigma^3\Big)\Big).
\]
To this end, we note a curious Ricci--flat limit of (\ref{homogeneous_metric}).
Setting $c=m\Lambda$, and taking the limit $\Lambda\rightarrow 0$ yields
a Ricci--flat metric with 9--dimensional group of conformal isometries
\[
g=d\xi_i\odot dx^i+4m(x^2dx^1-x^1dx^2)^2.
\]
This is a submaximal  metric of neutral 
signature \cite{CDT13, the}: if the dimension of the conformal
isometry algebra $\mathfrak{g}$ exceeds 9, then $\mathfrak{g}=\mathfrak{sl}(4, \R)$,
and the metric is conformally flat.

\appendix

\section{The construction for higher dimensions}

Of course, the definition of a projective structure makes sense in higher dimensions as well and hence it is natural to ask if the construction described in the main body of this article carries over to higher dimensions. Here we briefly show that this is indeed the case.  

As usual, let $\mathrm{PGL}(n+1,\R)$ denote the quotient of the general linear group $\mathrm{GL}(n+1,\R)$ by its center $Z$, so that
$$
\mathrm{PGL}(n+1,\R)\simeq \left\{ \begin{array}{cl} \mathrm{SL}(n+1,\R) &n\;\text{even},\\\mathrm{SL}_{\pm}(n+1,\R)/\{\pm\mathrm{I}_{n+1}\}& n\;\text{odd},\end{array}\right.
$$
where $\mathrm{SL}_{\pm}(n+1,\R)$ denotes the group of real $(n+1)$-by-$(n+1)$ matrices with determinant $\pm 1$.  

The projective linear group acts from the left on $\mathbb{RP}^n=\left(\R^{n+1}\setminus\{0\}\right)/\mathbb{R}^*$ by matrix multiplication. The stabiliser subgroup of the line spanned by ${}^t(1\;0\;\ldots\;0)$ will be denoted by $G\subset \mathrm{PGL}(n+1,\R)$. The elements of $G$ are matrices of the form
$$
\begin{pmatrix} \det a^{-1} & b \\ 0 & a\end{pmatrix}
$$
for $n$ even and
$$
\left[\begin{array}{cc} \pm\det a^{-1} & b \\ 0 & a\end{array}\right]
$$
for $n$ odd, where $b\in \R_n$ and $a \in \mathrm{GL}(n,\R)$. Here, the square brackets indicate that the matrix is only well defined up to an overall sign.

Cartan's construction carries over to higher dimensions so that we canonically obtain a Cartan geometry $(\pi : \projb \to N,\theta)$ of type $(\mathrm{PGL}(n+1,\R),G)$ for every projective structure $[\nabla]$ on a smooth $n$-manifold $N$. Again, we write
$$
\theta=\left(\begin{array}{cc} -\tr \phi & \eta \\ \omega & \phi\end{array}\right)
$$
for an $\R_n$-valued $1$-form $\eta$, an $\R^n$-valued $1$-form $\omega$ and a $\mathfrak{gl}(n,\R)$-valued $1$-form $\phi$. The curvature $2$-form $\Theta$ satisfies
$$
\Theta=\d \theta+\theta\wedge\theta=\left(\begin{array}{cc} 0 & L(\omega\wedge\omega) \\ 0 & W(\omega\wedge\omega)\end{array}\right),
$$
for smooth curvature functions 
\[L : \projb \to \mathrm{Hom}\left(\R^n\wedge\R^n,\R_n\right)
\] and 
$$
W : \projb \to \mathrm{Hom}\left(\R^n\wedge\R^n,\R_n\otimes \R^n\right).
$$ 
Note that the function $W$ represents the~\textit{Weyl projective curvature tensor} of $(N,[\nabla])$ and that we have the Bianchi-identity
$$
\d \Theta=\Theta\wedge\theta-\theta\wedge\Theta,
$$
the algebraic part of which reads
\begin{equation}\label{algbianchi:again}
0=L(\omega\wedge\omega)\wedge\omega\quad \text{and}\quad 0=W(\omega\wedge\omega)\wedge\omega. 
\end{equation}
We have a Lie group embedding defined by
$$
\chi : \mathrm{GL}(n,\R) \to G, \quad a \mapsto \begin{pmatrix} \det a^{-1} & 0 \\ 0 & a\end{pmatrix},
$$
for $n$ even and defined by 
$$
\chi : \mathrm{GL}(n,\R) \to G, \quad a \mapsto 
\left[\begin{array}{cc} |\det a^{-1}| & 0 \\ 0 & a\end{array}\right],
$$
for $n$ odd. 

Recall that $\theta$ satisfies the equivariance property 
$$
R_g^*\theta=\mathrm{Ad}(g^{-1})\circ\theta,
$$
for all $g \in G$, where $\mathrm{Ad}$ denotes the adjoint representation of $G$. Identifying $\mathrm{GL}(n,\R)$ with its image under $\chi$, the equivariance property of $\theta$ implies that the tensor field $\eta\omega:=\eta_i\otimes\omega^i$ is invariant under the $\mathrm{GL}(n,\R)$ right action. Furthermore, since $\omega$ and $\eta$ are both semi-basic for the quotient projection $\projb \to \projb/\mathrm{GL}(n,\R)$, it follows that the smooth $2n$-manifold $M=\projb/\mathrm{GL}(n,\R)$ carries a unique signature $(n,n)$ metric $g$ and a unique non-degenerate $2$-form $\Omega$ having the property that $g$ pulls back to $\projb$ to be the symmetric part of $\eta\omega$ and $\Omega$ pulls back to $\projb$ to be the anti-symmetric part of $\eta\omega$. Moreover, we compute
$$
\aligned
0=&\;\d\left(\eta\wedge\omega\right)=\d \eta\wedge\omega-\eta\wedge\d\omega=\left[-\eta\wedge(\phi+\mathrm{Id}\tr \phi)+L(\omega\wedge\omega)\right]\wedge\omega\\
&-\eta\wedge\left[-(\phi+\mathrm{Id}\tr\phi)\wedge\omega\right]\\
=&\;L(\omega\wedge\omega)\wedge\omega,
\endaligned
$$ 
where we used~\eqref{algbianchi:again}. It follows that $\Omega$ is symplectic. 

We leave it to the interested reader to check that the pair $(g,\Omega)$ defines again a bi-La\-grang\-ian structure on $M$ whose symmetry vector fields are in one-to-one correspondence with the symmetry vector fields of $(N,[\nabla])$. Moreover, we may introduce local coordinates on $M$ so that $g$ and $\Omega$ take the form~\eqref{coordexpmetsymp}. In particular, the metric $g$ is still Einstein with non-zero scalar curvature, as can be verified by direct computation. 

\end{document}